\documentclass[12pt]{amsart}

\usepackage[left=2.2cm,tmargin=2.5cm,bmargin=2.5cm,right=2.2cm]{geometry}

\usepackage{amsmath,amssymb,physics,cprotect,hyperref}

\newcommand{\Gal}{\operatorname{Gal}}

\newtheorem{theo}{Theorem}

\newtheorem{prop}[theo]{Proposition}
\newtheorem{lemma}[theo]{Lemma}

\theoremstyle{remark}
\newtheorem{remark}[theo]{Remark}

\theoremstyle{definition}

\begin{document}

\title{An explicit lower bound for the unit distance problem}

\author{Will Sawin}

\maketitle

\begin{abstract} We show that there are sets of $n$ points in the plane with $n$ arbitrarily large that contain more than $n^{1.014}$ pairs of points separated by a distance exactly $1$. This improves on very recent work of a team at OpenAI, who proved the same result with an inexplicit exponent greater than $1$, drastically improving on the best previous lower bound and disproving a conjecture of Erd\H{o}s. The method is number-theoretic, relying on constructing algebraic number fields of large degree and small discriminant with many primes of small norm via a Golod-Shafarevich criterion argument.  \end{abstract}

We prove the following:

\begin{theo}\label{main} For $n$ arbitrarily large, there exists a set of points $U \subset \mathbb R^2$ such that $\# U = n$ and $\# \{ (v_1,v_2) \in U \mid \abs{v_1-v_2}=1\} \geq n^{1.014114}/C$ for an absolute constant $C$. \end{theo}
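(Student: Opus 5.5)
We would realize the unit-distance graph inside a CM field. Choose a totally real field $F$ of degree $d$ and the CM extension $K = F(i)$, and fix an embedding $\iota\colon K \to \mathbb C \cong \mathbb R^2$. Let $\mathcal U = \{ u \in \mathcal O_K^\times \cdot (\text{elements}) : u \bar u = 1\}$ be the group of norm-one elements for $K/F$. Then $\abs{\iota(u)} = 1$ for every $u\in\mathcal U$, so any two points of $\iota(\mathcal O_K)$ differing by an element of $\mathcal U$ are at distance $1$.

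The point set will be $U = \iota(S)$, where $S \subset \mathcal O_K$ is a box: elements whose images under all $2d$ (really $d$ pairs of) complex embeddings have absolute value at most $R$. Its size is roughly $R^{2d}/\sqrt{\abs{\Delta_K}}$. For the unit steps we take elements $u = \alpha/\bar\alpha$ with $\alpha \in \mathcal O_K$ built from primes of $F$ splitting in $K$. If $\mathfrak p = \pi\mathcal O_K$ is principal and split, with $\pi\ne\bar\pi$, then $\pi/\bar\pi$ has norm $1$. Products $\prod_j (\pi_j/\bar\pi_j)^{\epsilon_j}$ over choices $\epsilon_j\in\{0,\pm1\}$ give $3^k$ distinct unit vectors, by unique factorization of ideals.

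To get lattice translates we use $x$ and $x + \beta/\bar\beta$, where $\beta = \prod \pi_j^{\epsilon_j}$. Multiplying by $\bar\beta$-denominators, we instead take the set $S' = \{ x \in \mathcal O_K : \text{box}\}$ scaled so that $\beta/\bar\beta \cdot N$ is integral, with $N = \prod \bar\pi_j$. Then each $x\in S$ has $\gg 3^k$ neighbors $x + N\beta/\bar\beta$ in the slightly enlarged box, provided every embedding of $N\beta/\bar\beta$ is $\le R$. The size of these elements at all embeddings is about the product of the norms of the $\pi_j$ to power $1/d$. So we need many split primes of small norm, and also bounded class group (or we work in the Hilbert class field tower, where all ideals become principal).

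Counting edges against points then gives $\#\text{edges}/n \approx 3^k$, with $n \approx R^{2d}\abs{\Delta_K}^{-1/2}$ and $R \gtrsim \prod N(\mathfrak p_j)^{1/(2d)}$. The exponent gain is therefore $\log 3^k / \log n$. This is positive if we can keep $k \asymp d$ while $\abs{\Delta_K}^{1/d}$ and the norms of the $\mathfrak p_j$ stay bounded. That is exactly an infinite class field tower with bounded root discriminant in which a positive proportion (in $d$) of small primes split completely.

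This existence step is the main obstacle. To handle it we would start from a base CM field $K_0$ such as $\mathbb Q(i, \sqrt{m})$ with many ramified primes. We would apply the Golod–Shafarevich criterion to the maximal unramified $2$-extension (or $p$-extension) in which a chosen finite set $T$ of small primes splits completely. This is a $T$-split version à la Ihara/Maire, and it requires $d_2 \mathrm{Cl}^T \ge 2 + 2\sqrt{r_2 + 1 + \#T}$ roughly. Every layer $K_n$ of this tower is CM, has root discriminant equal to that of $K_0$, has the primes above $T$ splitting completely (so $k \ge \#T\cdot[K_n:K_0]$), and after passing one step further its ideals from $K_{n}$ capitulate, making the $\pi_j$ principal. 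Finally we would optimize numerically over $K_0$ and $T$ (primes above $5$, $13$, \dots{} splitting in $\mathbb Q(i)$) to achieve the exponent $1.014114$. We would also verify that the roots of unity and the unit group contribute only constant factors, which gives the constant $C$.
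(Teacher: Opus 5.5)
\textbf{There is a genuine gap where you make the split primes principal.} Your unit vectors $\pi/\bar\pi$ need actual generators of the split primes. You offer two ways to get them: a bounded class group, for which nothing along an infinite tower gives a mechanism, or capitulation one layer up. Capitulation does not do the job. Even granting that $\mathfrak p_j\mathcal O_{K_{n+1}}$ becomes principal, if $\mathfrak p_j$ splits completely in $K_{n+1}/K_n$ this ideal is a product of $[K_{n+1}:K_n]$ primes of $K_{n+1}$, none of which need be principal. So in $K_{n+1}$, where you actually compute, you still have only about $\#T\,[K_n:K_0]$ generators, each of absolute norm $N\mathfrak p_j^{[K_{n+1}:K_n]}$, while the degree has grown by $[K_{n+1}:K_n]$. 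Your exponent gain is therefore divided by $[K_{n+1}:K_n]\ge 2$, which you do not control. In a tower in which the primes above your $T$ split it is worse: the transfer argument only kills ($2$-power) classes of $K_n$ in $\mathrm{Cl}^T(K_{n+1})$, i.e.\ modulo classes of primes above $T$, which says nothing about those primes.

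The missing idea is to pigeonhole instead of demanding principality. Sort the $\prod_{\mathfrak p}(k(\mathfrak p)+1)$ integral ideals $J$ with $N_{K/F}(J)=\prod_{\mathfrak p}\mathfrak p^{k(\mathfrak p)}$ into the group $G_K$ of Lemma~\ref{modified-class-group}, of order at most $2^{d+1}h^-(K)$, and work in a fractional ideal $I$ rather than $\mathcal O_K$ (Lemma~\ref{ideal-from-primes}). The class number is then not bounded. It only needs an upper bound exponential in $d$ with a small enough base, which is what Louboutin's estimate (Lemma~\ref{class-group-bound}) supplies. By \eqref{delta-value}, $\delta>0$ is exactly the inequality $(1-\frac1R)^2\prod_{p}(k(p)+1)^{1/(e(p)f(p))}>2\sqrt{\lambda}\,\log(\lambda)\,\frac{e}{4\pi}$ with $\lambda=\operatorname{rd}_{K/F}$. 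The factor $2$ here comes from the unit contribution $\#(\mathcal O_F^\times/(\mathcal O_F^\times)^2)\le 2^d$, so, contrary to your last sentence, the units do not contribute only a constant.

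\textbf{The tower step also fails as written.} Unramified $2$-extensions of a CM field need not be CM. For example, the Hilbert class field of $\mathbb Q(\sqrt{-17})$ has Galois group $D_4$ over $\mathbb Q$, in which complex conjugation is a non-central reflection, whereas for a Galois CM field complex conjugation is central. So the layers of your tower over $\mathbb Q(i,\sqrt m)$ are not CM in general. The paper instead takes totally real, everywhere unramified $2$-extensions $F$ of the real quadratic field $Q=\mathbb Q(\sqrt{\prod_{q\in T}q})$, with the paper's $T$ satisfying $\prod_{q\in T}q\equiv 3\bmod 4$. It then sets $K=F(\sqrt{-1})$, which is unramified over $F$ at finite places since $-\prod_{q\in T}q\equiv 1\bmod 4$. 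This gives $\operatorname{rd}_{K/F}=\sqrt{4\prod_{q\in T}q}$ and $r(G)-d(G)\le 2$ plus one per prime of $Q$ above $S_{\mathbb Q}$.

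Asking your small primes to split \emph{completely} is also very expensive. Each one that fails to split in the genus field $K_0(\{\sqrt q\mid q\mid m\})$ can lower your $d_2\mathrm{Cl}^T$ by one, so the $q\mid m$ would have to be large, inflating the root discriminant. The paper only kills the \emph{square} of Frobenius (inertia degree at most $2$, ramification allowed). This keeps the genus field $\mathbb Q(\{\sqrt q\mid q\in T\})$ as a quotient and gives $d(G)\ge\#T-1$; every prime of its $S_{\mathbb Q}$ is ramified or inert in $Q$.

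Finally, you drop the boundary loss $(1-\frac1R)^{2d}$ from enlarging the box, which is exponential in $d$. The constant $1.014114$ is just the value of \eqref{delta-specialized} for the specific $T$, $S_{\mathbb Q}$, $k(p)$ and $R=72$. Optimizing numerically inside a framework without the pigeonhole, the class number bound and these local conditions does not produce it.
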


The best known upper bound is $\# \{ (v_1,v_2) \in U \mid \abs{v_1-v_2}=1\}  = O(n^{4/3})$, due to Spencer, Szemerédi, and Trotter~\cite{Spencer1983}.

Until very recently, the best lower bound known in Theorem \ref{main} was of the form $\# \{ (v_1,v_2) \in U \mid \abs{v_1-v_2}=1\} \geq n^{1+ \frac{c}{\log \log n}}$ for some $c>0$,  by Erd\H{o}s~\cite[Theorem 2]{Erdos1946}, who also conjectured that this was optimal~\cite[p. 267]{Erdos1994} (sometimes making the weaker conjecture that $\# \{ (v_1,v_2) \in U \mid \abs{v_1-v_2}=1\} \leq n^{1+o(1)}$~\cite[p. 62]{Erds1982}). Very recently, both conjectures were disproved~\cite{machinewriteup} by a team at OpenAI, consisting of Lijie Chen using an internal OpenAI model and Mark Sellke and Mehtaab Sawhney verifying correctness.
They showed a lower bound of the form $n^{1+\delta}$ for some $\delta>0$ not made explicit. It would be possible to make this explicit, but the value of $\delta$ obtained would likely be very small. A team of mathematicians produced a simplified version of this argument~\cite{humanwriteup}, and obtained $\delta \approx 6 \times 10^{-38}$. We provide a bound with an exponent $\delta$ that is explicit, and not exorbitantly small (differing from the upper bound by a factor of less than $24$), though certainly not optimal.

To do this, we make explicit and sharpen every step of the argument. Interestingly, this rarely makes the arguments much more complex: The total length of this paper is essentially the same as the original OpenAI writeup~\cite{machinewriteup}, though longer than the simplified version prepared by human authors~\cite{humanwriteup}.

The basic strategy is the same as the OpenAI team's proof: To construct a set of points in $\mathbb R^2$ with many unit distances, it suffices to construct a lattice, and a projection of the lattice to $\mathbb R^2$, with many short vectors whose projections to the plane have length $1$, as then the projection to $\mathbb R^2$ of the intersection of the lattice with a suitable ball will have many unit distances. (Erd\H{o}s's original argument used a lattice in $\mathbb R^2$, rather than a higher-dimensional lattice with a projection to $\mathbb R^2$.)

A useful source of lattices is the rings of integers of number fields, and these carry natural projections to the plane coming from the embeddings of the number field into the complex numbers. When the number field is a CM field $K$, containing a totally real subfield $F$, the squared length of the projection of a lattice vector to the plane lies in $F$. Since $F$ has smaller degree than $K$ and thus intuitively has fewer elements, it is possible for the projections of many vectors in $K$ to have the same length-squared in $F$, and then we can divide by this length to obtain many projections of lattice vectors with length $1$. (For other embeddings, the lengths are given by conjugates of the same elements of $F$, and this lets us control the total length of the vector as well.) One can interpret Erd\H{o}s's original argument as showing that this happens when $K= \mathbb Q(i)$ and $F=\mathbb Q$. Using a similar strategy, combined with a pigeonhole argument to deal with the class group, one can show that this happens for a general $K$ if $F$  has many small primes that split in $K$ and the class number of $K$ is not too large. If $F$ has many more small primes that split in $K$ than $\mathbb Q$ has small primes that split in $\mathbb Q(i)$, there is the potential for a better bound than Erd\H{o}s's, but one has to control the class number.

One has bounds for the class number in terms of the discriminant, so it therefore suffices to find totally real fields $F$ with CM extensions $K$ such that $F$ has many small primes that split in $K$ and the discriminant of $K$ is small. To make the construction work, one needs an infinite sequence of fields, and since one wants to take their discriminant small, this requires a sequence of growing degree. The best known construction of a sequence of number fields with increasing degree and discriminant not too large is the Golod-Shafarevich construction of $2$-class field towers of quadratic fields~\cite{GolodShafarevich}, which produces fields of discriminant exponential in the degree.\footnote{The original OpenAI solution used $3$-class field towers of cyclic cubic towers, but the argument was simplified to use $2$-class field towers after a suggestion of Victor Wang.} This construction uses the Golod-Shafarevich criterion~\cite{GolodShafarevich} to show that a certain Galois group is infinite to deduce that there are infinitely many distinct fields in the tower. However, these fields need not have many small primes, since the primes of $\mathbb Q$ may have large inertia degree (i.e. residue field extension degree) in these fields. Fortunately, it is possible to modify the construction to bound the inertia degree by applying the Golod-Shafarevich criterion to the Galois group after quotienting by Frobenius elements, obtaining an infinite group in which these Frobenius elements are trivial and thus the inertia degree is $1$. This argument was made, in a different context, by Hajir, Maire, and Ramakrishna~\cite{hajir2021shafarevich}.

Elements of the strategy have appeared before in work on different problems: The pigeonhole argument is similar to one used by Ellenberg and Venkatesh~\cite{Ellenberg2007} after a suggestion of Michel and Soundararajan to bound torsion subgroups of class groups, and the idea of constructing high-dimensional lattices with good properties from Golod-Shafarevich towers has been used multiple times~\cite{Lenstra1986Codes,LitsynTsfasman1987Constructive}. \cite{humanwriteup} contains a more detailed discussion of the context for this argument.

We improve the above argument in multiple ways: Rather than using the ring of integers as a lattice, we can work with any ideal. This makes the pigeonhole argument easier. Rather than using the class number, we can consider the relative class number, which is smaller. Rather than quotienting by Frobenius elements, we can quotient by a fixed power of the Frobenius element, forcing the inertia degree to be small instead of at most $1$, but making it much easier to apply the Golod-Shafarevich criterion (this argument was also made by Hajir, Maire, and Ramakrishna~\cite{hajir2021shafarevich}). We discuss other improvements after giving the proof.

This paper itself is likely far from optimized. It could be possible to improve the lower bound using techniques from analytic number theory, proving a more precise version of the key class group bound \cite[Corollary 3]{Louboutin2000} for the relevant number fields, using techniques from algebraic number theory, showing that there exist infinite towers of number fields satisfying a different set of local conditions, or using computational techniques, searching for better values of the parameters $T, S_{\mathbb Q}, k, R$.

Finally, we discuss two further questions, both suggested by Thomas Bloom: what is the density of the set of $n$ for which a set of points in Theorem \ref{main} exists, and what are the limits to the exponents that can be achieved by this argument?

The author was supported by NSF grant DMS-2502029 and was a Sloan Research Fellow while working on this manuscript and would like to thank Noga Alon, Thomas Bloom, Sebastien Bubeck, Timothy Gowers, Daniel Litt, Mehtaab Sawhney, Mark Sellke, Peter Sarnak, Arul Shankar, Kannan Soundararajan, Jacob Tsimerman, Victor Wang, and Melanie Matchett Wood for helpful conversations.

\section{Proofs}

In Lemma \ref{set-from-lattice}, we describe, in quantitative form, how a lattice with many short vectors that have the same length on projection to $\mathbb R^2$ gives a set of points in $\mathbb R^2$ with many unit distances.

To construct such a lattice, it suffices to choose an ideal in the ring of integers of a CM number field $K$ over a totally real field $F$ which contains many elements that have the same norm from $K$ to $F$. This is done in Lemma \ref{lattice-from-ideal}, resulting in Lemma \ref{set-from-ideal} which gives a simple bound for unit distances in terms of how many elements of an ideal of $K$ have a given norm.

 We can always find a suitable ideal when $F$ has many small primes that split in $K$.  This is shown in Lemma \ref{ideal-from-primes}.  Lemma \ref{galois-case} specializes Lemma \ref{ideal-from-primes} to the case when $K$ is Galois.  Lemma \ref{class-group-bound} gives a bound for the relative class number. Combining all these, Proposition \ref{main-sequence-criterion} gives a version of Theorem \ref{main} with an explicit value of $\delta$ if there exist infinitely many number fields satisfying certain criteria depending on a set of primes $S_{\mathbb Q}$. Lemma \ref{field-existence} constructs infinitely many such fields given another set of primes $T$, and we finally prove Theorem \ref{main} by finding explicit suitable sets $S_{\mathbb Q}$ and $T$.

\begin{lemma}\label{set-from-lattice} Let $d$ be a positive integer and $\Lambda$ be a lattice in $\mathbb R^{2d}$. Let $\norm{\cdot}$ be a norm on $\mathbb R^{2d}$, not necessarily Euclidean.  Let $\pi \colon \Lambda \to \mathbb R^{2}$ be an injective group homomorphism and let $\abs{\cdot}$ denote the Euclidean norm on $\mathbb R^2$. Let $\rho = \min ( \norm{v} \mid v \in \Lambda \setminus\{0\})$. Let $M$ be the number of vectors $v$ in $\Lambda$ such that $\norm{v} \leq 1$ and $ \abs{ \pi(v)}=1$. 

Then for each $R>1$ there exists a set $U \subseteq \mathbb R^2$ such that \[\# U \leq  \left( \frac{ 2R}{\rho}+1 \right)^{2d}\] and \begin{equation}\label{short-vector-ratio} \frac{\# \{ (v_1,v_2) \in U \mid \abs{v_1-v_2}=1\} }{ \# U} \geq \left(1-\frac{1}{R}\right)^{2d} M .\end{equation}
\end{lemma}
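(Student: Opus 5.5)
Take $U = \pi(B)$, where $B = \{ v \in \Lambda \mid \norm{v} \leq R \}$. Since $\pi$ is injective, $\# U = \# B$. The proof then splits into an upper bound for $\# B$ and a lower bound on the number of unit-distance pairs in $U$.

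\emph{Upper bound on $\# B$.} Use a packing argument in the norm $\norm{\cdot}$. The balls $v + \frac{\rho}{2} \mathcal B$, for $v \in \Lambda$, where $\mathcal B$ is the closed unit ball of $\norm{\cdot}$, have pairwise disjoint interiors. This holds because any two distinct lattice points are at $\norm{\cdot}$-distance at least $\rho$. For $v \in B$, each such ball lies in $(R + \rho/2)\mathcal B$. Comparing Lebesgue volumes and using homogeneity, $\operatorname{vol}(t\mathcal B) = t^{2d}\operatorname{vol}(\mathcal B)$, gives
\[ \# B \leq \left( \frac{R+\rho/2}{\rho/2}\right)^{2d} = \left(\frac{2R}{\rho}+1\right)^{2d}. \]
This only needs $\mathcal B$ to be a convex body with nonempty interior, which holds for any norm on $\mathbb R^{2d}$.

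\emph{Lower bound on unit-distance pairs.} Let $W$ be the set of the $M$ vectors $w \in \Lambda$ with $\norm{w} \leq 1$ and $\abs{\pi(w)} = 1$. For each $w \in W$, count the $v \in B$ with $v + w \in B$. Each such $v$ gives an ordered pair $(\pi(v+w), \pi(v))$ in $U$ at distance $\abs{\pi(w)} = 1$. Different pairs $(v,w)$ give different ordered pairs of points, since $\pi$ is injective. It therefore suffices to show that for each $w$,
\[ \#\{v \in B \mid v+w \in B\} \geq \left(1 - \tfrac1R\right)^{2d} \# B. \]

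This per-$w$ bound is the main obstacle. A pure counting comparison would need a matching lower bound for $\#B$, which is not available. Instead, use a sub-ball: every $v \in \Lambda$ with $\norm{v} \leq R-1$ satisfies $v + w \in B$ by the triangle inequality. So the count for $w$ is at least $\#(\Lambda \cap (R-1)\mathcal B)$, and it remains to show
\[ \#(\Lambda \cap (R-1)\mathcal B) \geq \left(\tfrac{R-1}{R}\right)^{2d} \#(\Lambda \cap R\mathcal B). \]

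This scaling inequality for lattice-point counts in dilated convex bodies is not true in general for fixed $R$. I would therefore exploit the freedom to vary the radius, with a dyadic or averaging argument. Define $f(t) = \#(\Lambda \cap t\mathcal B)$. If the inequality fails at every radius $R' \in \{R, R^2, R^3, \ldots\}$ in some sense, then $f$ grows faster than $t^{2d}$ along that sequence. But $f(t) \leq (2t/\rho+1)^{2d}$ by the packing bound, and $f(t) \geq 1$. This should force some good radius, and the lemma would be applied at that radius with constants adjusted.

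Alternatively, the intended route may be simpler. Choose $U = \pi(\Lambda \cap R\mathcal B)$ and count edges from $v$ with $\norm{v} \leq R-1$. Then compare $f(R-1)$ to $f(R)$ by choosing $R$ among radii where $f(R-1)/f(R) \geq ((R-1)/R)^{2d}$. Such radii exist since the product of these ratios along a chain telescopes against the polynomial growth bound. I expect getting exactly the stated constant to require this radius-selection step, and I would first try to make it work directly with the sub-ball $(R-1)\mathcal B$.
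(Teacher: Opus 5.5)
Your packing bound and your reduction to the sub-ball $(R-1)\mathcal B$ via the triangle inequality are correct, and they match the paper. The gap is the final step. With the ball centred at the origin, the inequality $\#(\Lambda\cap(R-1)\mathcal B)\ge(1-\frac1R)^{2d}\,\#(\Lambda\cap R\mathcal B)$ can fail. For example, take $\Lambda=\mathbb Z^{2d}$ with the sup norm and $R=1.9$: the left side is $1$, while the right side is $(27/19)^{2d}>1$.

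Your radius-selection fallback does not repair this, for two reasons. First, the lemma is asserted for every prescribed $R>1$. Replacing $R$ by another radius $R'$ changes both constants, $(2R'/\rho+1)^{2d}$ and $(1-1/R')^{2d}$, so ``constants adjusted'' does not prove the statement. Second, the telescoping does not even produce a good radius. Suppose the ratio fails at every radius along $t_0,t_0+1,\dots,t_m$. Then you get $f(t_m)>f(t_0)(t_m/t_0)^{2d}$. The only inputs you have are $f(t_0)\ge 1$ and $f(t_m)\le(2t_m/\rho+1)^{2d}$, and these reduce this to $t_m/t_0<2t_m/\rho+1$, which is no contradiction. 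The underlying problem is that the packing constant $(2/\rho)^{2d}$ is far from the true density $\operatorname{vol}(\mathcal B)/\operatorname{covol}(\Lambda)$. So it cannot be played against an exact scaling comparison.

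The missing idea is to move the \emph{centre} of the ball rather than its radius. Let $w$ be uniform in a fundamental domain $D$ of $\Lambda$. Unfolding the sum over $\Lambda$ (the translates $D-\lambda$ tile $\mathbb R^{2d}$) gives
$\mathbb E_w\,\#(\Lambda\cap(w+r\mathcal B))=r^{2d}\operatorname{vol}(\mathcal B)/\operatorname{covol}(\Lambda)$ exactly. This is homogeneous of degree $2d$ in $r$, so the expectation of
$\#(\Lambda\cap(w+(R-1)\mathcal B))-(1-\frac1R)^{2d}\,\#(\Lambda\cap(w+R\mathcal B))$
is zero, and some $w$ makes it nonnegative. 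You can also take such a $w$ with $\Lambda\cap(w+R\mathcal B)\neq\emptyset$: the expression vanishes when that set is empty, and the expected count is positive. Your packing bound and your triangle-inequality step are both translation-invariant. So taking $U=\pi(\Lambda\cap(w+R\mathcal B))$ finishes the proof with exactly the stated constants; this is the paper's argument.
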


\begin{proof} For a point $w \in \mathbb R^{2d}$ and $r>0$, we let $B(r,w)$ be the ball of radius $r$ around $w$ defined using the norm $\norm{\cdot}$. We will choose $U$ to be $\pi ( B(R,w) \cap \Lambda)$ for a suitable point $w$.  We have
\[ \#U = \# (B(R,w)\cap \Lambda) \leq \left( \frac{ 2R+\rho}{\rho} \right)^{2d}= \left( \frac{ 2R}{\rho}+1 \right)^{2d} \]
because the balls of radius $\frac{\rho}{2}$ around the points of $B(R,w)\cap \Lambda$ do not overlap each other and are all contained in $B(R+\frac{\rho}{2},w)$, hence their total volume is at most the volume of $B(R+\frac{\rho}{2},w)$, which is $ \left( \frac{ 2R+\rho}{\rho} \right)^{2d}$ times the volume of a ball of radius $\frac{\rho}{2}$.

For any $ v_1 \in B(R-1,w)\cap \Lambda$, the number of $v_2$ in $B(R,w)\cap \Lambda$ with $\abs{\pi(v_1-v_2)}=1$ is at least $M$, since $v_1+v$ has this property for any $v\in \Lambda $ such that $\norm{v} \leq 1$ and $ \abs{ \pi(v)}=1$. Since $\pi$ is injective, every pair $(v_1,v_2)$ of this form in $B(R,w) \cap \Lambda$ gives a distinct pair in $U$, so that
\begin{equation}\label{short-vector-lattice-point}\frac{ \# \{ (v_1,v_2) \in U \mid \abs{v_1-v_2}=1\} }{\#U} \geq \frac{ \# (B(R-1,w)\cap \Lambda)}{\#(B(R,w)\cap \Lambda)} M .\end{equation}

For $r>0$, if we choose the point $w$ uniformly at random from a fundamental domain for $\Lambda$, the expected value of $\#(B(r,w) \cap \Lambda)$ is  $r^{2d}$ times the volume of the unit ball for $\norm{\cdot}$ divided by the volume of the fundamental domain of $\Lambda$. Hence the expected value of 
\[ \#(B(R-1,w) \cap \Lambda) - \left( 1-\frac{1}{R} \right)^{2d} \#(B(R,w)\cap \Lambda) \] is zero. Thus we can choose $w$ such that \begin{equation}\label{lattice-point-bound} \#(B(R-1,w) \cap \Lambda) - \left( 1-\frac{1}{R} \right)^{2d} \#(B(R,w)\cap \Lambda) \geq 0.\end{equation} Combining \eqref{short-vector-lattice-point} and \eqref{lattice-point-bound}, we obtain \eqref{short-vector-ratio}.
\end{proof}

We fix some notation that we will use for the remainder of the paper: Let $F$ be a totally real number field and let $K$ be a totally imaginary quadratic extension of $F$. Let $d=[F:\mathbb Q]$. Let $c$ denote the generator of $\Gal(K/F)$ which acts on $K$ by complex conjugation. Let $N_{K/F}$ be the map from fractional ideals of $K$ to fractional ideals of $F$ that sends an ideal $I$ to the ideal generated by $\beta c(\beta)$ for all $\beta \in I$.  Let $\Sigma_{F,\infty}$ be the set of infinite places of $F$. Let $h^-(K)= h(K)/h(F)$ be the relative class number.  Let $\Delta_K$ be the discriminant of $K$, $\Delta_F$ the discriminant of $F$, and $\operatorname{rd}_{K/F} =( \Delta_K/\Delta_F)^{1/d}$ be the relative root discriminant.

\begin{lemma}\label{index-formula} Let $I$ be a fractional ideal of $K$, and choose $\beta \in I$ and $\alpha \in N_{K/F}(I)$. Then we have 
\[ \frac{ \# (I/(\beta))}{ \#(N_{K/F}(I)/(\alpha))} =   \prod_{v \in \Sigma_{F,\infty}} \frac{ \abs{\beta}_v^2 }{ \abs{\alpha}_v}.\]
\end{lemma}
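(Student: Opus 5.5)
The plan is to write the index $\#(I/(\beta))$ as a ratio of absolute norms and then use multiplicativity of the relative norm of ideals. Both $\beta$ and $\alpha$ are assumed nonzero, as they must be for the left side to be finite. For a nonzero $\beta \in I$ we have $(\beta) \subseteq I$, so $(\beta) = I J$ for an integral ideal $J$. Hence
\[ \#(I/(\beta)) = \mathrm{N}(J) = \frac{\abs{N_{K/\mathbb Q}(\beta)}}{\mathrm{N}(I)}, \]
where $\mathrm{N}$ denotes the absolute norm of fractional ideals. This uses the standard facts that $\mathrm{N}$ is multiplicative on fractional ideals and that $\mathrm{N}((\beta)) = \abs{N_{K/\mathbb Q}(\beta)}$. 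In the same way,
\[ \#(N_{K/F}(I)/(\alpha)) = \frac{\abs{N_{F/\mathbb Q}(\alpha)}}{\mathrm{N}(N_{K/F}(I))}. \]

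The main step is to identify the ideal $N_{K/F}(I)$, as defined in the paper (generated by all $\beta c(\beta)$ with $\beta \in I$), with the usual ideal-theoretic relative norm. Once that is done, transitivity gives $\mathrm{N}_F(N_{K/F}(I)) = \mathrm{N}_K(I)$. For this identification I would use the standard fact that, since $K/F$ is Galois of degree $2$, the ideal $I \cdot c(I)$ is extended from $F$. Precisely, $I\,c(I) = \mathfrak{a}\mathcal{O}_K$, where $\mathfrak{a}$ is the usual relative norm; this can be checked prime by prime in the split, inert and ramified cases. It then suffices to show that $\mathfrak{a}$ is generated by the elements $\beta c(\beta)$.

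One inclusion is immediate: each $\beta c(\beta)$ lies in $I c(I) \cap F = \mathfrak{a}$. For the reverse inclusion, polarization gives
\[ \beta c(\gamma) + \gamma c(\beta) = (\beta+\gamma)c(\beta+\gamma) - \beta c(\beta) - \gamma c(\gamma), \]
so all traces $\mathrm{Tr}_{K/F}(\beta c(\gamma))$ lie in $N_{K/F}(I)$. This shows $\mathrm{Tr}_{K/F}(I c(I)) \subseteq N_{K/F}(I)$. Note also that $\mathrm{Tr}_{K/F}(\mathfrak{a}\mathcal{O}_K) = \mathfrak{a}\,\mathrm{Tr}(\mathcal{O}_K)$, and $\mathrm{Tr}(\mathcal{O}_K)$ can be a proper ideal when $K/F$ is ramified above $2$. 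So I would argue locally at each prime $\mathfrak p$ of $F$. Localizing, $I_{\mathfrak p} = \pi\mathcal{O}_{K,\mathfrak p}$ is principal because $\mathcal{O}_{K,\mathfrak p}$ is semilocal, and then $\pi c(\pi)$ already generates $\mathfrak{a}_{\mathfrak p}$. This local argument gives equality directly without the trace step, and I expect this identification to be the only real obstacle.

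With the identification in hand, we get $\mathrm{N}(N_{K/F}(I)) = \mathrm{N}(I)$. The ratio of the two indices is therefore
\[ \frac{\abs{N_{K/\mathbb Q}(\beta)}}{\abs{N_{F/\mathbb Q}(\alpha)}}. \]
Finally, since $K$ is CM, each infinite place $v$ of $F$ extends to a pair of complex-conjugate embeddings of $K$. This gives $\abs{N_{K/\mathbb Q}(\beta)} = \prod_{v} \abs{\beta}_v^2$, where $\abs{\cdot}_v$ is the usual complex absolute value at an embedding above $v$. Likewise $\abs{N_{F/\mathbb Q}(\alpha)} = \prod_v \abs{\alpha}_v$. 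Substituting these gives the formula in the statement.
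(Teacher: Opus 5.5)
Your proof is correct and is essentially the paper's argument in global language: the paper uses the Chinese remainder theorem and the product formula, while you use multiplicativity of absolute norms and $\mathrm{N}((\beta))=\abs{N_{K/\mathbb Q}(\beta)}$, and your cancellation $\mathrm{N}(N_{K/F}(I))=\mathrm{N}(I)$ is the paper's step that the contributions of the finite places match. Your local check that the ideal generated by the $\beta c(\beta)$ coincides with the usual relative norm (with the local generator $\pi$ chosen inside $I$) is a detail the paper leaves implicit, and you handle it correctly.
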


\begin{proof} Let $V_K$ be the set of places of $K$. To each place is associated an absolute value. For the nonarchimedean absolute value we will use the product formula normalization, while for the archimedean absolute value we will continue to use the standard normalization. This requires us to square the absolute value at complex places when we state the product formula. For $I$ a fractional ideal of $K$ and $v$ a non-archimedean place of $K$, we define $\abs{I}_v$ to be the $v$-adic absolute value of a generator of $I \otimes_{\mathcal O_K} \mathcal O_{K_v}$, and the same for $F$. For $\beta \in I$ we have by the Chinese remainder theorem and the product formula
\[ \# (I/(\beta)) =\prod_{v \in V_K\setminus \Sigma_{K,\infty}} \# ((I \otimes_{\mathcal O_K} \mathcal O_{K_v})/ (\beta \mathcal O_{K_v})) =\prod_{v \in V_K\setminus \Sigma_{K,\infty}} \abs{I}_v/ \abs{\beta}_v \] \[= \prod_{v \in V_K\setminus \Sigma_{K,\infty}} \abs{I}_v \cdot \prod_{v \in V_K\setminus \Sigma_{K,\infty}}\abs{\beta}_v^{-1}=  \prod_{v \in V_K\setminus \Sigma_{K,\infty}} \abs{I}_v \cdot \prod_{v \in \Sigma_{K,\infty} }\abs{\beta}_v^2 =  \prod_{v \in V_K\setminus \Sigma_{K,\infty}} \abs{I}_v \cdot \prod_{v \in \Sigma_{F,\infty}} \abs{\beta}_v^2.\]
Similarly, we have
\[ \# (N_{K/F}(I)/(\alpha)) =  \prod_{v \in V_F\setminus \Sigma_{F,\infty}} \# (( N_{K/F}(I)\otimes_{\mathcal O_F} \mathcal O_{F_v})/(\alpha \mathcal O_{F_v}))=  \prod_{v \in V_F\setminus \Sigma_{F,\infty}} \abs{N_{K/F}(I)}_v/\abs{\alpha}_v  \] \[ =  \prod_{v \in V_F\setminus \Sigma_{F,\infty}} \abs{N_{K/F}(I)}_v \cdot  \prod_{v \in V_F\setminus \Sigma_{F,\infty}}  \abs{\alpha}_v^{-1}=  \prod_{v \in V_F\setminus \Sigma_{F,\infty}}\abs{N_{K/F}(I)}_v \cdot \prod_{v \in \Sigma_{F,\infty} }\abs{\alpha}_v \]
and we have
\[  \prod_{v \in V_F\setminus \Sigma_{F,\infty}} \abs{N_{K/F}(I)}_v  = \prod_{v \in V_K\setminus \Sigma_{K,\infty}} \abs{I}_v\] since the contribution of each place on the left hand side matches the contribution of the places lying over it on the right hand side.

Combining the above, we obtain the statement. \end{proof}

\begin{lemma}\label{lattice-from-ideal} Let $I$ be a fractional ideal of $K$ and let $\alpha \in N_{K/F} (I)$ be an element.

Let $\Lambda$ be the ideal $I$ viewed as a lattice in $\prod_{v\in \Sigma_{F,\infty}}  K_v \cong \prod_{v\in \Sigma_{F,\infty}} \mathbb C \cong \mathbb R^{2d}$. For $x = (x_v)_{v \in \Sigma_{F,\infty}}\in \prod_{v\in \Sigma_{F,\infty}}  K_v$, let $\norm{x} = \sup_{v \in \Sigma_{F,\infty}} \frac{ \abs{x_v}}{\sqrt{\abs{\alpha}_v}}$. Let $\pi \colon I \to \mathbb R^2$ be the composition $I \to K \to K_v \cong \mathbb C \cong \mathbb R^2$, divided by $\sqrt{ \abs{\alpha}_v}$, for an arbitrarily chosen infinite place $v$. 

Then the minimum norm of a nonzero vector in $\Lambda$ is at least  $ (\# ( N_{K/F} (I)/ (\alpha)))^{-\frac{1}{2d}}$ and every element $\beta \in I $ such that $\beta c(\beta)=\alpha$ satisfies $\norm{\beta} =1$ and $\abs{\pi(\beta)}=1$. \end{lemma}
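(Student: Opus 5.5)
The second claim is immediate, so I will dispose of it first and then prove the minimum-norm bound with Lemma \ref{index-formula} and the AM--GM inequality.

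\emph{Elements with $\beta c(\beta)=\alpha$.} Since $K$ is CM, complex conjugation on $K_v\cong\mathbb C$ induces $c$ at every infinite place. Hence $\abs{\beta}_v^2=\beta c(\beta)$ evaluated at $v$, which equals $\abs{\alpha}_v$; here $\alpha$ is totally positive, being a norm. Therefore $\abs{\beta}_v/\sqrt{\abs{\alpha}_v}=1$ for every $v\in\Sigma_{F,\infty}$. This gives $\norm{\beta}=1$, and applying it at the chosen place gives $\abs{\pi(\beta)}=1$.

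\emph{Minimum norm.} Let $\beta\in I$ be nonzero. I would apply Lemma \ref{index-formula} to $\beta$ and $\alpha$. Since $(\beta)\subseteq I$, the index $\#(I/(\beta))$ is a positive integer, so it is at least $1$. The lemma then gives
\[
\prod_{v\in\Sigma_{F,\infty}}\frac{\abs{\beta}_v^2}{\abs{\alpha}_v}=\frac{\#(I/(\beta))}{\#(N_{K/F}(I)/(\alpha))}\geq \frac{1}{\#(N_{K/F}(I)/(\alpha))}.
\]
The left side is the product of the $d$ quantities $(\abs{\beta}_v/\sqrt{\abs{\alpha}_v})^2$. Each of these is at most $\norm{\beta}^2$, so the product is at most $\norm{\beta}^{2d}$. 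Taking $2d$-th roots yields $\norm{\beta}\geq(\#(N_{K/F}(I)/(\alpha)))^{-1/(2d)}$.

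\emph{Remaining checks.} A few minor points should be recorded.
\begin{itemize}
\item $\norm{\cdot}$ is a genuine norm on $\mathbb R^{2d}$, because each $\abs{\alpha}_v>0$.
\item $\alpha\neq0$, which is implicit in the statement since the index would otherwise be infinite.
\item $\pi$ is an injective homomorphism, because the embedding $K\to K_v$ is injective.
\end{itemize}

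No step is a real obstacle. The only care needed is matching normalizations: the lemma uses the standard archimedean absolute value, so $\abs{\beta}_v^2$ at the place of $F$ is the usual squared complex modulus. This is exactly what makes the sup-norm comparison go through.
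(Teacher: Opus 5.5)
Your proposal is correct and is essentially the paper's proof. The second claim follows because $c$ acts as complex conjugation at every infinite place, so $\abs{\beta}_v^2=\abs{\alpha}_v$. The minimum-norm bound comes from Lemma \ref{index-formula}, $\#(I/(\beta))\geq 1$, and bounding the product of the $d$ squared ratios $\abs{\beta}_v^2/\abs{\alpha}_v$ by $\norm{\beta}^{2d}$. One cosmetic point: the inequality you use is ``a product of $d$ nonnegative terms is at most the $d$-th power of the largest,'' not AM--GM.
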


\begin{proof} Since the absolute value of a complex number is obtained by multiplying by its complex conjugate and taking the square root, for every $\beta\in I$ such that $\beta c(\beta)=\alpha$ and every $v\in \Sigma_{F,\infty}$ we have \[ \abs{\beta}_v = \sqrt{ \abs{ \beta c(\beta)}_v } =\sqrt{\abs{\alpha}_v}.\]
Thus
\[ \norm{\beta} =\sup_{v \in \Sigma_{F,\infty}} \frac{ \abs{\beta}_v}{\sqrt{\abs{\alpha}_v}}= \sup_{v \in \Sigma_{F,\infty}} 1 =1\]
and
\[ \abs{\pi(\beta)}=\frac{ \abs{\beta}_v}{\sqrt{\abs{\alpha}_v}}=1.\]

Furthermore, for an arbitrary nonzero vector in $\Lambda$, that is, for an arbitrary nonzero $\beta$ in $I$, by Lemma \ref{index-formula} we have
\[ \prod_{v \in \Sigma_{F,\infty}} \frac{ \abs{\beta}_v }{ \sqrt{ \abs{\alpha}_v}} =  \frac{ \sqrt{\# (I/(\beta))}}{\sqrt{  \# (N_{K/F}(I) /(\alpha))}} \geq \frac{1}{\sqrt{  \# (N_{K/F}(I)/(\alpha))}}\]
and hence
\[ \sup_{v \in \Sigma_{F,\infty}}\frac{ \abs{\beta}_v }{ \sqrt{ \abs{\alpha}_v}}  \geq (\# (N_{K/F}(I)/(\alpha)))^{-\frac{1}{2d}}\]
since $\#\Sigma_{F,\infty}= d$. \end{proof}

\begin{lemma}\label{set-from-ideal} Let $F$ be a totally real number field and $K$ a totally imaginary quadratic extension of $F$. Let $I$ be a fractional ideal of $K$ and let $\alpha \in N_{K/F}(I)$ be an element.  Let $M$ be the number of elements $\beta \in I $ such that $\beta c(\beta)=\alpha$.

Then for each $R>1$  there exists a set $U \subseteq \mathbb R^2$ such that \[\# U \leq  \left( 2R (\# (N_{K/F}(I)/(\alpha)))^{\frac{1}{2d}} +1 \right)^{2d}\] and \begin{equation}\frac{\# \{ (v_1,v_2) \in U \mid \abs{v_1-v_2}=1\} }{ \# U} \geq \left(1-\frac{1}{R}\right)^{2d} M .\end{equation}
\end{lemma}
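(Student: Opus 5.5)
The plan is to feed the output of Lemma \ref{lattice-from-ideal} directly into Lemma \ref{set-from-lattice}. Take $\Lambda = I$ embedded in $\prod_{v\in\Sigma_{F,\infty}} K_v \cong \mathbb R^{2d}$, with the norm $\norm{x} = \sup_v \abs{x_v}/\sqrt{\abs{\alpha}_v}$ and the projection $\pi$ to a chosen infinite place $v$, rescaled by $\sqrt{\abs{\alpha}_v}$, exactly as in Lemma \ref{lattice-from-ideal}.

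First I check the hypotheses of Lemma \ref{set-from-lattice}.
\begin{itemize}
\item $I$ is a lattice of full rank $2d$ in $\prod_v K_v$, since $I$ is a fractional ideal of a degree-$2d$ totally imaginary field.
\item $\norm{\cdot}$ is a genuine norm on $\mathbb R^{2d}$, because $\alpha \neq 0$ (we may assume this, as otherwise $M=0$ and $\beta c(\beta)=0$ forces $\beta=0$; the case $\alpha=0$ is degenerate and can be excluded or handled trivially).
\item $\pi$ is an injective group homomorphism, since $I \to K \to K_v$ is injective, a field embedding restricted to $I$.
\end{itemize}

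Next I identify the quantities. Let $M'$ be the number of $\beta\in\Lambda$ with $\norm{\beta}\le 1$ and $\abs{\pi(\beta)}=1$. By Lemma \ref{lattice-from-ideal}, every $\beta\in I$ with $\beta c(\beta)=\alpha$ satisfies $\norm{\beta}=1$ and $\abs{\pi(\beta)}=1$, so $M' \ge M$. The same lemma gives
\[ \rho \ge (\# (N_{K/F}(I)/(\alpha)))^{-1/(2d)}, \]
hence
\[ \frac{2R}{\rho}+1 \le 2R\,(\# (N_{K/F}(I)/(\alpha)))^{1/(2d)}+1. \]

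Applying Lemma \ref{set-from-lattice} with this $R>1$ then yields a set $U$ with the stated size bound, and with the unit-distance ratio at least $(1-1/R)^{2d}M' \ge (1-1/R)^{2d}M$.

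There is no real obstacle here. The only points needing a word are the finiteness of the index $\#(N_{K/F}(I)/(\alpha))$, which holds for $\alpha\neq 0$ in $N_{K/F}(I)$, and the monotonicity in $M$, which is immediate.
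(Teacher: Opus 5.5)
Your proposal is correct and matches the paper's proof, which simply combines Lemma \ref{set-from-lattice} with Lemma \ref{lattice-from-ideal}. Your checks (full-rank lattice, injectivity of $\pi$, $M'\ge M$, and $2R/\rho+1\le 2R(\#(N_{K/F}(I)/(\alpha)))^{1/(2d)}+1$) spell out what the paper leaves implicit. One small slip in your aside: if $\alpha=0$, then $\beta=0$ is a solution, so $M=1$ rather than $0$. That case is still harmless, since the norm in Lemma \ref{lattice-from-ideal} requires $\alpha\neq 0$ anyway and for $\alpha=0$ the size bound is vacuous.
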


\begin{proof} This follows immediately from combining Lemma \ref{set-from-lattice} and Lemma \ref{lattice-from-ideal}.\end{proof}

The remainder of the argument will be devoted to finding $F,K, I , \alpha$ with the number of elements $\beta \in I $ such that $\beta c(\beta)=\alpha$ as large as possible and  $\# (N_{K/F}(I)/(\alpha))$ as small as possible.

\begin{lemma}\label{modified-class-group} Let $G_K$ be the set of pairs of a fractional ideal $J$ of $K$ and a generator $u$ of $N_{K/F}(J)$, up to the equivalence relation $(J, u ) \sim ( (\alpha)J, \alpha c(\alpha) u)$ for all $\alpha \in K^\times$. Then  \begin{equation}\label{eq-mcg} \#G_K \leq  2^{d+1} h^{-}(K).\end{equation} \end{lemma}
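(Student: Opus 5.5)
We will bound $\#G_K$ by fibering it over the class group of $K$. Here $G_K$ is a group under componentwise multiplication, $(J,u)(J',u')=(JJ',uu')$, and the equivalence relation is compatible with this product. Consider the homomorphism $\phi\colon G_K \to \mathrm{Cl}(K)$ sending $(J,u)$ to the class of $J$; this is well defined since $(\alpha)J$ has the same class as $J$.

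First we describe the image. The class $[J]$ lies in the image exactly when $N_{K/F}(J)$ is principal, i.e. when $[J]$ lies in the kernel of the norm map $N\colon \mathrm{Cl}(K)\to\mathrm{Cl}(F)$. For a CM extension the composite of $N$ with the extension map $\mathrm{Cl}(F)\to \mathrm{Cl}(K)$ is squaring on $\mathrm{Cl}(F)$. More useful here is the standard fact that $N\colon \mathrm{Cl}(K)\to\mathrm{Cl}(F)$ is surjective when $K/F$ is ramified at some finite prime, and has image of index at most $2$ in general. This follows from class field theory, since the cokernel of the norm is a quotient of $\Gal(K/F)$. Hence $\#\ker N \leq 2\, h(K)/h(F) = 2h^-(K)$.

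Next we bound the fibers. The kernel of $\phi$ consists of classes of pairs $(J,u)$ with $J=(\gamma)$ principal. Multiplying by $\gamma^{-1}$ reduces such a pair to $(\mathcal O_K, u)$ with $u\in\mathcal O_F^\times$. Two pairs $(\mathcal O_K,u)$ and $(\mathcal O_K,u')$ are equivalent iff $u'=\varepsilon c(\varepsilon)u$ for some $\varepsilon\in\mathcal O_K^\times$. So $\ker\phi\cong \mathcal O_F^\times/N_{K/F}(\mathcal O_K^\times)$. Since $N_{K/F}(\mathcal O_K^\times)\supseteq N_{K/F}(\mathcal O_F^\times)=(\mathcal O_F^\times)^2$, we get $\#\ker\phi\le [\mathcal O_F^\times:(\mathcal O_F^\times)^2]$. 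By Dirichlet's unit theorem $\mathcal O_F^\times\cong\{\pm1\}\times\mathbb Z^{d-1}$, so this index is $2^d$. Combining the two bounds gives $\#G_K\le 2^d\cdot 2h^-(K)=2^{d+1}h^-(K)$.

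The main obstacle is the index bound for the image of the norm on class groups. If one wants to avoid class field theory, there is a weaker but sufficient route. The map $\mathrm{Cl}(F)\to\mathrm{Cl}(K)$ has kernel of order at most $2$ for CM extensions (a standard fact), and composing with the norm gives squaring. The cleanest argument is still via the Artin map: $\mathrm{Cl}(F)/N(\mathrm{Cl}(K))$ is a quotient of $\Gal(H_F K/K)\cdots$, which is at most $\Gal(K/F)$ of order $2$, because the Hilbert class field $H_F$ satisfies $\Gal(H_F/ H_F\cap K)\cong N$-image quotient. I would write this out carefully using $\mathrm{Cl}(F)/N_{K/F}\mathrm{Cl}(K)\cong\Gal((H_F\cap K)/F)$, which has order at most $2$.
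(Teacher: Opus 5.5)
Your proposal is correct and follows essentially the same route as the paper. The map $G_K\to\operatorname{Cl}(K)$ has kernel $\mathcal O_F^\times/N_{K/F}(\mathcal O_K^\times)$, of size at most $2^d$, and image $\ker(\operatorname{Cl}(K)\to\operatorname{Cl}(F))$, of size at most $2h^-(K)$; the latter follows from the class field theory identification $\operatorname{Cl}(F)/N_{K/F}\operatorname{Cl}(K)\cong\Gal((H_F\cap K)/F)$. Your aside about avoiding class field theory does not suffice as sketched, since it only shows that the norm image contains $\operatorname{Cl}(F)^2$, but nothing in your argument depends on it.
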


The group $G_K$ may be described as the class group of a rank one torus over $F$, namely the kernel of the norm map from the Weil restriction of $\mathbb G_m$ from $K$ to $F$ to $\mathbb G_m$, and thus admits adelic and cohomological descriptions, but the elementary description here is most convenient for us.

\begin{proof} $G_K$ is a group under the group operation $(J_1, u_1) (J_2, u_2)=(J_1J_2,u_1u_2)$. We have an exact sequence

\[ \mathcal O_K^\times \to  \mathcal O_F^\times \to G_K \to \operatorname{Cl}(K) \to \operatorname{Cl}(F)  \]
where the maps $\mathcal O_K^\times \to \mathcal O_F^\times$ and $\operatorname{Cl}(K) \to \operatorname{Cl}(F)  $ are both the norm map $x \mapsto x c(x)$, the map $G_K \to \operatorname{Cl}(K) $ sends $(J,u)$ to $J$, and the map $ \mathcal O_F^\times \to G_K$ sends $u$ to $(1,u)$. It is straightforward to check that this sequence is exact.

This gives \begin{equation*}\begin{aligned} \#G_K= \# \operatorname{coker}( \mathcal O_K^\times \to  \mathcal O_F^\times ) & \#{\operatorname{ker} ( \operatorname{Cl}(K) \to \operatorname{Cl}(F))} \\ =  \#{ \operatorname{coker}( \mathcal O_K^\times \to  \mathcal O_F^\times )}& \frac{h(K)}{h(F)} \#{\operatorname{coker} ( \operatorname{Cl}(K) \to \operatorname{Cl}(F))}. \end{aligned}\end{equation*}

Since $\mathcal O_F^\times \subseteq \mathcal O_K^\times$  and the norm map restricted to $\mathcal O_F^\times$ is the square map, we have
\[  \#{ \operatorname{coker}( \mathcal O_K^\times \to  \mathcal O_F^\times )}  \leq  \# (  \mathcal O_F^\times/ (  \mathcal O_F^\times)^2) \leq 2^d \]
since $\mathcal O_F^\times \cong \mathbb Z^{d-1} \times \mathbb Z/2\mathbb Z$ by Dirichlet's unit theorem.

By class field theory, $\#{\operatorname{coker} ( \operatorname{Cl}(K) \to \operatorname{Cl}(F))} $ is the cokernel of the natural map from the Galois group of the maximal abelian unramified extension of $K$ to the Galois group of the maximal abelian unramified extension of $F$, and thus has size $2$ if $K/F$ is unramified at all finite places and $1$ otherwise, and in particular is at most $2$. Combining these, we obtain \eqref{eq-mcg}. \end{proof}

\begin{lemma}\label{ideal-from-primes} Let $S_F$ be a set of prime ideals of $\mathcal O_F$ and let $k$ be a function from $S_F$ to the positive integers. Assume that each prime ideal in $S_F$ is split in $K$. Then there exists a fractional ideal $I$ of $K$ and an element $\alpha \in N_{K/F}(I)$ such that the number of $\beta \in I$ such that $\beta c(\beta)=\alpha$ is at least \[ \frac{ \prod_{\mathfrak p \in S_F} (k(\mathfrak p)+1  )}{ 2^d h^{-}(K)} \] and  
\[ \# (N_{K/F}(I)/(\alpha)) = \prod_{\mathfrak p\in S_F} \# (\mathcal O_F/\mathfrak p)^{k(\mathfrak p)}.\] \end{lemma}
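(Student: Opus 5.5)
Since each $\mathfrak p \in S_F$ splits, write $\mathfrak p\mathcal O_K = \mathfrak P c(\mathfrak P)$. For each function $j$ with $0 \le j(\mathfrak p) \le k(\mathfrak p)$, consider the ideal
\[ J_j = \prod_{\mathfrak p \in S_F} \mathfrak P^{j(\mathfrak p)} c(\mathfrak P)^{k(\mathfrak p)-j(\mathfrak p)}. \]
All of these satisfy $N_{K/F}(J_j) = \mathfrak a := \prod_{\mathfrak p} \mathfrak p^{k(\mathfrak p)}$. They are pairwise distinct ideals, since the $\mathfrak P$-adic valuation recovers $j$. This gives $N:=\prod_{\mathfrak p}(k(\mathfrak p)+1)$ distinct ideals with the same norm. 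The plan is to pigeonhole them in the group $G_K$ of Lemma \ref{modified-class-group} and produce many principal ideals with a common norm generator.

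The difficulty is that $\mathfrak a$ need not be principal, so we do not pair $J_j$ with a generator of $\mathfrak a$ directly. Instead fix one $j_0$ and set $I = J_{j_0}$. For each $j$ consider $J_j I^{-1}$, whose norm is $\mathfrak a \mathfrak a^{-1} = (1)$. Hence $(J_jI^{-1}, 1)$ is an element of $G_K$. Now $\#G_K \le 2^{d+1}h^-(K)$, so some class of $G_K$ contains at least $N/(2^{d+1}h^-(K))$ of these elements. To recover the claimed factor $2^d$, I would refine this by using classes $(J_j I^{-1},u)$ with $u$ ranging over the two units $\pm1$: both $(J_jI^{-1},1)$ and $(J_jI^{-1},-1)$ are valid elements. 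Alternatively, I would note that $(J_jI^{-1},1)$ lies in the subgroup of $G_K$ where the image in $\operatorname{Cl}(F)$ is trivial, which by the exact sequence has size at most $2^d h^-(K)$ because the cokernel factor of $2$ drops out. The second route seems cleaner and is the one I would carry out: it bounds the number of classes containing these elements by $2^d h^-(K)$.

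Next, redefine $I$ to be $J_{j_1}$ for some $j_1$ in the popular class, and compare the other members $j$ of that class with it. If $(J_jI^{-1},1) \sim (J_{j_1}I^{-1},1)$, then there is $\beta\in K^\times$ with $J_j = (\beta) J_{j_1}$ and $\beta c(\beta) = 1$. So, replacing $I$ by $J_{j_1}$, each $j$ in the class gives $\beta_j$ with $(\beta_j)J_{j_1} = J_j$. To land inside $J_{j_1}$ with norm equal to a fixed $\alpha$, I would instead take $I = \mathcal O_K$-multiples so that containment holds. Concretely, choose $I := J_{j_1}^{-1}\mathfrak a$. Then $\gamma_j := $ a generator of $J_j J_{j_1}^{-1}$ satisfies $\gamma_j\in J_{j_1}^{-1}J_j \subseteq J_{j_1}^{-1}\mathfrak a$ if we also multiply through appropriately. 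This bookkeeping is routine and I expect it to work out.

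The cleanest final choice, which I would verify, is $I = c(J_{j_1})$ and $\alpha$ defined as follows. The relation $J_j = (\beta_j) J_{j_1}$ gives $J_j c(J_{j_1}) = (\beta_j)\mathfrak a$. Pick a single $j$ in the class and a generator, which exists because $J_j c(J_{j_1})$ is principal. Then $\mathfrak a$ is principal up to that, with generator $\alpha$ of norm type, and the elements $\beta_j\cdot(\text{fixed generator})$ lie in $I$ and have $\beta c(\beta)=\alpha$. They are distinct because the ideals $J_j$ are distinct. Then $\# (N(I)/(\alpha)) = \#(\mathcal O_F/\mathfrak a) = \prod \#(\mathcal O_F/\mathfrak p)^{k(\mathfrak p)}$. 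The main obstacle is this bookkeeping: arranging the class-group pigeonhole so that the same $\alpha$ works for all $\beta$ and $(\alpha)=N_{K/F}(I)\cdot\mathfrak a$-type index comes out exactly, while getting $2^d$ rather than $2^{d+1}$.
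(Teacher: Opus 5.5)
Your setup matches the paper's. You take the $\prod_{\mathfrak p}(k(\mathfrak p)+1)$ ideals $J_j$ of norm $\mathfrak a$, pigeonhole the classes $(J_jJ_{j_0}^{-1},1)$ in $G_K$, and for each $j$ in a popular class containing $j_1$ obtain $\beta_j$ with $(\beta_j)=J_jJ_{j_1}^{-1}$ and $\beta_j c(\beta_j)=1$. The step that fails is the choice of $I$ and $\alpha$. Your two attempts are the same ideal, since $J_{j_1}^{-1}\mathfrak a\mathcal O_K=c(J_{j_1})$. The first rests on the inclusion $J_j\subseteq \mathfrak a\mathcal O_K$, which is backwards: $\mathfrak a\mathcal O_K=J_j c(J_j)\subseteq J_j$. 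The second needs a generator of $J_j c(J_{j_1})=(\beta_j)\,\mathfrak a\mathcal O_K$, i.e.\ needs $\mathfrak a\mathcal O_K$ to be principal, which is exactly what you noted cannot be assumed. No principality is needed, because the $\beta_j$ already have norm $1$: take $I=J_{j_1}^{-1}$ and $\alpha=1$. Then:
\begin{itemize}
\item $\beta_j\in J_jJ_{j_1}^{-1}\subseteq J_{j_1}^{-1}=I$, because $J_j\subseteq\mathcal O_K$;
\item $1\in\mathfrak a^{-1}=N_{K/F}(I)$, because $\mathfrak a\subseteq\mathcal O_F$;
\item $\#(\mathfrak a^{-1}/\mathcal O_F)=\#(\mathcal O_F/\mathfrak a)=\prod_{\mathfrak p}\#(\mathcal O_F/\mathfrak p)^{k(\mathfrak p)}$.
\end{itemize}
This is the paper's choice $I=J_0^{-1}J_{\mathrm m}^{-1}$, $\alpha=u^{-1}$, with fiber representative $(J_{\mathrm m},u)=(J_{j_1}J_{j_0}^{-1},1)$.

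Your argument for $2^d$ in place of $2^{d+1}$ is also not valid. Every $(J,u)\in G_K$ has $N_{K/F}(J)=(u)$ principal. So ``the subgroup of $G_K$ where the image in $\operatorname{Cl}(F)$ is trivial'' is all of $G_K$, and restricting to it cannot improve the bound $2^{d+1}h^-(K)$ of Lemma~\ref{modified-class-group}; nothing ``drops out''. Your first suggestion, using $u=\pm1$, is not developed into an argument. The paper recovers the factor $2$ by a simple point you missed: if $\beta$ is attached to $J$ then so is $-\beta$, and $J=(\beta)J_{j_1}$ is recovered from either. Hence the number of $\beta\in I$ with $\beta c(\beta)=\alpha$ is at least $2\prod_{\mathfrak p}(k(\mathfrak p)+1)/\#G_K$, which suffices. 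Your subgroup idea can also be repaired with the right subgroup. All the $(J_jJ_{j_0}^{-1},1)$ lie in the subgroup of classes $(J,u)$ with $u$ totally positive. This is well defined, since each $\gamma c(\gamma)$ is totally positive, and it omits $(\mathcal O_K,-1)$, so it has index at least $2$ in $G_K$.
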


\begin{proof} Consider the set $L$ of ideals $J$ of $\mathcal O_K$ such that $N_{K/F}(J)=\prod_{\mathfrak p \in S_F} \mathfrak p^{k(\mathfrak p)}$. Then \[\# L = \prod_{\mathfrak p \in S_F} (k(\mathfrak p)+1  )\] since each prime $\mathfrak p$ in $S_F$ by assumption splits into two primes $\mathfrak p_1,\mathfrak p_2$ of $\mathcal O_K$, and any ideal $J_\mathfrak p$ of the form $\mathfrak p_1^j \mathfrak p_2^{k(\mathfrak p)-j}$ for $j\in \{0,\dots,k(\mathfrak p)\}$ will have $N_{K/F}(J_{\mathfrak p})= \mathfrak p^{k(\mathfrak p)}$, and then we can choose one $J_\mathfrak p$ for each $\mathfrak p$ and take the product.

If we fix one element $J_0 \in L$, then for any $J \in L$ we have 
\[ N_{K/F} ( J J_0^{-1})  = N_{K/F}(J)  N_{K/F}(J_0)^{-1}= (\prod_{\mathfrak p \in S_F} \mathfrak p^{k(\mathfrak p)}) (\prod_{\mathfrak p \in S_F} \mathfrak p^{k(\mathfrak p)})^{-1}  = (1) \] so that $J \mapsto ( J J_0^{-1}, 1)$ defines a map from $L$ to the set $G_K$ of Lemma \ref{modified-class-group}. By the pigeonhole principle, one fiber of this map must have cardinality at least $\frac{\#L}{\#G_K}$ . This means that there exists a fractional ideal $J_{\mathrm m}$ and $u $ a generator for $ N_{K/F}( J_{\mathrm m}) $ such that for at least $\frac{\#L}{\#G_K}$ ideals $J \in L$ there exists $\beta \in K$ with \begin{equation}\label{fiber-equation-1} \beta J_{\mathrm m} =  J J_0^{-1} \end{equation} and \begin{equation}\label{fiber-equation-2} u \beta c(\beta)=1.\end{equation}

Then we can take $I = J_0^{-1} J_{\mathrm m}^{-1} $ and $\alpha = u^{-1}$.  Since $u$ is a generator of $N_{K/F}(J_{\mathrm m})$, the element $\alpha$ is a generator of $N_{K/F}(J_{\mathrm m})^{-1}$. Since $N_{K/F}(J_0)$ is an ideal of $\mathcal O_F$,  the element $\alpha$ is contained in 
 \[N_{K/F}(I) = N_{K/F}(J_0)^{-1} N_{K/F}(J_{\mathrm m})^{-1}\] and we have
 \[ \# (N_{K/F}(I)/(\alpha)) = \#( (N_{K/F}(J_0)^{-1} N_{K/F}(J_{\mathrm m})^{-1})/ N_{K/F}(J_{\mathrm m})^{-1} )\] \[= \# (\mathcal O_F/ N_{K/F}(J_0)) = \#(\mathcal O_F/ \prod_{\mathfrak p \in S_F} \mathfrak p^{k(\mathfrak p)})=  \prod_{\mathfrak p\in S_F} \# (\mathcal O_F/\mathfrak p)^{k(\mathfrak p)}.\] 

Since $J$ is an ideal of $\mathcal O_K$, \eqref{fiber-equation-1} implies that $\beta J_{\mathrm m} \subseteq J_0^{-1}$ as fractional ideals and therefore $\beta \in J_{\mathrm m}^{-1} J_0^{-1}=I$.  Equation \eqref{fiber-equation-2} implies that $\beta c(\beta)=\alpha$.  The same $\beta$ cannot arise from two different ideals $J$ as we can recover $J$ from $\beta$ using \eqref{fiber-equation-1}, and each $J$ has at least two $\beta$ since if $\beta$ works then $-\beta$ does as well, so the number of $\beta \in I$ such that $\beta c(\beta)=\alpha$ is at least
\[ 2  \frac{ \#L}{\#G_K} \geq  \frac{ \prod_{\mathfrak p \in S_F} (k(\mathfrak p)+1  )}{ 2^d h^{-}(K)}\] by Lemma \ref{modified-class-group}. \end{proof}

\begin{lemma}\label{galois-case} Assume $K$ is a Galois extension of $\mathbb Q$, so that $F$ is Galois over $\mathbb Q$ also. Let $S_\mathbb Q$ be a set of prime numbers and let $k$ be a function from $S_\mathbb Q$ to the positive integers. For each prime  $p\in S_{\mathbb Q}$, assume that each prime of $F$ lying over $p$ splits in the extension from $F$ to $K$.  For each prime $p\in S_{\mathbb Q}$, let $e_p$ be the ramification index of $p$ in $F$ and let $f_p$ be the inertia degree of $p$ in $F$.

Then there exists a fractional ideal $I$ of $K$ and an element $\alpha \in N_{K/F}(I)$ such that the number of $\beta \in I$ such that $\beta c(\beta)=\alpha$ is at least \[ \frac{ \prod_{ p \in S_\mathbb Q} (k(p)+1  )^{\frac{d}{ e_p f_p}}}{ 2^d h^{-}(K)} \] and  
\[ \# (N_{K/F}(I)/(\alpha)) = \prod_{p \in S_\mathbb Q} p^{\frac{k(p) d}{e_p}}.\] \end{lemma}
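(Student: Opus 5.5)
We apply Lemma \ref{ideal-from-primes} with $S_F$ taken to be the set of all primes $\mathfrak p$ of $\mathcal O_F$ lying over some $p\in S_{\mathbb Q}$. The function on $S_F$ is $k(\mathfrak p)=k(p)$, where $p$ is the rational prime below $\mathfrak p$. By hypothesis every such $\mathfrak p$ splits in $K$, so the lemma applies. It produces a fractional ideal $I$ and an element $\alpha\in N_{K/F}(I)$ with at least $\prod_{\mathfrak p\in S_F}(k(\mathfrak p)+1)/(2^d h^-(K))$ solutions to $\beta c(\beta)=\alpha$, and with $\#(N_{K/F}(I)/(\alpha))=\prod_{\mathfrak p\in S_F}\#(\mathcal O_F/\mathfrak p)^{k(\mathfrak p)}$.

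It then remains to evaluate both products by grouping the primes $\mathfrak p$ according to the rational prime $p$ below them. Since $K$ is Galois over $\mathbb Q$ and $F$ is the fixed field of complex conjugation, we first observe that $F$ is Galois over $\mathbb Q$. The point is that $c$ is central in $\Gal(K/\mathbb Q)$: for any embedding of $K$ into $\mathbb C$, complex conjugation restricts to the unique automorphism $c$ of the CM field. Hence $\Gal(K/F)=\{1,c\}$ is normal.

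Because $F/\mathbb Q$ is Galois, each $p$ factors in $F$ as $g_p$ distinct primes, all with the same ramification index $e_p$ and inertia degree $f_p$, and these satisfy $e_pf_pg_p=d$. So exactly $g_p=d/(e_pf_p)$ primes of $S_F$ lie over $p$, and each has norm $\#(\mathcal O_F/\mathfrak p)=p^{f_p}$.

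The numerator is therefore $\prod_{p\in S_{\mathbb Q}}(k(p)+1)^{d/(e_pf_p)}$. The index is
\[
\prod_{p\in S_{\mathbb Q}}\left(p^{f_p k(p)}\right)^{d/(e_pf_p)}=\prod_{p\in S_{\mathbb Q}}p^{k(p)d/e_p},
\]
which is the claimed formula. No step is a serious obstacle. The only point requiring care is justifying that $F/\mathbb Q$ is Galois, which the statement asserts, so that $e_p$ and $f_p$ are well defined and uniform over the primes above $p$.
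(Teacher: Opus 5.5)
Your proposal is correct and follows the paper's argument exactly: apply Lemma~\ref{ideal-from-primes} with $S_F$ the primes of $F$ above $S_{\mathbb Q}$ and $k(\mathfrak p)=k(p)$. Then use that in the Galois extension $F/\mathbb Q$ there are $d/(e_pf_p)$ primes above each $p$, each with residue field of size $p^{f_p}$. Your extra check that $F/\mathbb Q$ is Galois, via centrality of $c$ in $\Gal(K/\mathbb Q)$, is sound; the paper simply takes this as part of the hypothesis.
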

\begin{proof} This follows from Lemma \ref{ideal-from-primes} by taking $S_F$ to be the set of primes lying over primes in $S_{\mathbb Q}$ and taking $k(\mathfrak p)=k(p)$ for $\mathfrak p$ lying over $p$, and recalling that the number of primes in a Galois extension of degree $d$ lying over a prime $p$ is $\frac{d}{ e_p f_p}$ and each of these primes has residue field of size $p^{f_p}$. \end{proof}

\begin{lemma}\label{class-group-bound} We have
\[ h^{-}(K) \leq 8  \operatorname{rd}_{K/F}^2 \left(\sqrt{\operatorname{rd}_{K/F} } \log(\operatorname{rd}_{K/F}) \frac{e}{4\pi} \right)^d . \] 
\end{lemma}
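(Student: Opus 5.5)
The plan is to combine the analytic class number formula for the relative class number with an explicit upper bound for $L(1,\chi)$, where $\chi$ is the quadratic Hecke character of $F$ attached to $K/F$. Since $h(K)/h(F)$ is a quotient of residues of Dedekind zeta functions, and the regulators of $K$ and $F$ differ by the Hasse unit index $Q\in\{1,2\}$ and a factor $2^{d-1}$, the formula gives
\[ h^-(K) = \frac{Q\, w_K}{(2\pi)^d}\sqrt{\frac{\abs{\Delta_K}}{\Delta_F}}\, L(1,\chi), \qquad \sqrt{\frac{\abs{\Delta_K}}{\Delta_F}} = \operatorname{rd}_{K/F}^{d/2}. \]
Here $w_K$ is the number of roots of unity in $K$, and the second identity is just the definition of $\operatorname{rd}_{K/F}$. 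This already produces the factor $\sqrt{\operatorname{rd}_{K/F}}^{\,d}/(2\pi)^d$. What remains is to show that $L(1,\chi)$ is at most about $\left(\frac{e}{2}\log \operatorname{rd}_{K/F}\right)^d$, up to a factor polynomial in $\operatorname{rd}_{K/F}$.

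For the bound on $L(1,\chi)$ I would quote Louboutin \cite[Corollary 3]{Louboutin2000}. That result states that for a CM field with $[K:\mathbb Q]=2d$,
\[ L(1,\chi) \le \left(\frac{e \log(\abs{\Delta_K}/\Delta_F)}{4d}\right)^{d}, \]
possibly with an extra term of the form $(\ldots)^{d-1}$ depending on the precise statement. Substituting $\log(\abs{\Delta_K}/\Delta_F) = d\log \operatorname{rd}_{K/F}$ makes the $d$'s cancel and gives $\left(\frac{e}{4}\log \operatorname{rd}_{K/F}\right)^d$. Together with the $(2\pi)^{-d}$ this is $\left(\frac{e}{8\pi}\log\operatorname{rd}_{K/F}\right)^d$, which is better than claimed by $2^{-d}$. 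So the stated bound, with constant $\frac{e}{4\pi}$, has slack of a factor $2^d$. That slack can absorb a weaker version of Louboutin's estimate, for instance one of the shape $\left(\frac{e\log(\abs{\Delta_K}/\Delta_F)}{2d}\right)^d$ valid when $\abs{\Delta_K}/\Delta_F$ is large. If I could not locate the precise statement, the fallback is to prove such a bound directly: write $L(1,\chi)$ via the Dirichlet series of $\zeta_K/\zeta_F$, bound it by $\zeta_K(\sigma)/\zeta_F(\sigma)$ evaluated slightly to the right of $1$ through a convexity or Phragm\'en--Lindel\"of argument, and choose $\sigma-1\approx 1/\log(\abs{\Delta_K}/\Delta_F)$.

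Finally, the prefactor $Q w_K$ and any lower-order terms from Louboutin's corollary must be absorbed into $8\operatorname{rd}_{K/F}^2$. Clearly $Q\le 2$. For $w_K$: if $K$ contains a primitive $m$-th root of unity, then $\mathbb Q(\zeta_m)\subseteq K$, and the ramification this forces gives a lower bound on the discriminant. This should yield $w_K\le 4\operatorname{rd}_{K/F}^2$, or a similar polynomial bound; the case where $w_K$ is a small power of $2$ or $6$ is handled trivially. I expect this step to be the main obstacle: matching the exact form of the cited analytic bound and controlling the constants ($w_K$, $Q$, and any secondary term) so that everything fits into $8\operatorname{rd}_{K/F}^2$ uniformly in $d$. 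The leading exponential term is routine once the citation is in hand.
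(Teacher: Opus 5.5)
Your strategy is the paper's. The paper quotes Louboutin's Corollary 3, which already contains the analytic class number formula, in the form $h^-(K)\le 2Q_Kw_K\sqrt{\Delta_K/\Delta_F}\,\bigl(\frac{e}{4\pi d}\log(\Delta_K/\Delta_F)\bigr)^d$. It then bounds $Q_K$ and $w_K$. The gap is in your constant bookkeeping, which is essentially the whole content of this lemma.

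Passed through the class number formula, the cited bound says $L(1,\chi)\le 2\bigl(\frac{e}{2d}\log(\Delta_K/\Delta_F)\bigr)^d$, not $\bigl(\frac{e}{4d}\log(\Delta_K/\Delta_F)\bigr)^d$. The constant $\frac{e}{4\pi}$ in the lemma is exactly Louboutin's constant. So there is no $2^d$ slack. The entire prefactor $8\operatorname{rd}_{K/F}^2$ must come from $2\cdot Q_K\cdot w_K$ with $Q_K\le 2$, which means you need precisely $w_K\le 2\operatorname{rd}_{K/F}^2$. Your suggested $w_K\le 4\operatorname{rd}_{K/F}^2$ would only give $16\operatorname{rd}_{K/F}^2$, and ``a similar polynomial bound'' does not prove the stated inequality.

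The missing step is short and uniform in $w_K$, so no case split for small $w_K$ is needed.
\begin{itemize}
\item Root discriminants only increase in extensions, and $\Delta_F^2\le\Delta_K$ gives $\Delta_K/\Delta_F\ge\Delta_K^{1/2}$. Hence $\operatorname{rd}_{K/F}\ge\operatorname{rd}_K\ge\operatorname{rd}_{\mathbb Q(\mu_{w_K})}$.
\item The cyclotomic root discriminant is $w/\prod_{p\mid w}p^{1/(p-1)}$. For even $w=\prod p^{a_p}$,
\[ \frac{w}{\prod_{p\mid w}p^{1/(p-1)}}\cdot w^{-1/2}=\prod_{p\mid w}p^{a_p/2-1/(p-1)}\ge 2^{-1/2}. \]
This holds because the only factor that can be less than $1$ is the one at $p=2$ with $a_2=1$.
\item Therefore $w_K\le 2\operatorname{rd}_{K/F}^2$, and the prefactor is $2\cdot 2\cdot 2\operatorname{rd}_{K/F}^2=8\operatorname{rd}_{K/F}^2$.
\end{itemize}
Your fallback of reproving the $L(1,\chi)$ bound by convexity is unnecessary. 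The lemma is just the citation plus these two elementary estimates, $Q_K\le 2$ and $w_K\le 2\operatorname{rd}_{K/F}^2$.
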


\begin{proof} Louboutin \cite[Corollary 3]{Louboutin2000} proved that
\[ h^{-}(K) \leq  2 Q_K w_K \sqrt{\Delta_K/\Delta_F} \left( \frac{e}{4\pi d} \log ( \Delta_K/\Delta_F) \right)^d = 2 Q_K w_K \operatorname{rd}_{K/F}^{\frac{d}{2} } \left( \frac{e \log \operatorname{rd}_{K/F}}{4\pi  }\right)^d \]
where $Q_K$ is the Hasse unit index of $K/F$ which is either $1$ or $2$ and $w_K$ is the number of roots of unity of $K$. Since $Q_K$ is either $1$ or $2$, it is at most $2$.

Recall that the root discriminant $\operatorname{rd}_K$ of a number field $K$ is $\Delta_K^{\frac{1}{[K:\mathbb Q]}}$. The root discriminant can only increase in a field extension, which implies that $\operatorname{rd}_{K/F} \geq \operatorname{rd}_K $ and also that $\operatorname{rd}_K \geq \operatorname{rd}_{ \mathbb Q(\mu_{w_K})}$ since $K$ certainly contains the cyclotomic field $\mathbb Q(\mu_{w_K})$.  The root discriminant of the cyclotomic field $\mathbb Q( \mu_{w_K})$ is \cite[Proposition 2.7]{Washington1997} \[\frac{w_K}{ \prod_{p \mid w_K} p^{ \frac{1}{p-1}} }\geq \sqrt{\frac{w_K}{2}}\]  so that $w_K$ is at most $2 \operatorname{rd}_{K/F}^2$.

Combining these, we obtain the statement. \end{proof}

\begin{prop}\label{main-sequence-criterion} Let $S_{\mathbb Q}$ be a set of prime numbers. Let $k,e$ and $f$ be functions from $S_\mathbb Q$ to positive integers. Let $\lambda>1$ be a real number.

Assume that there exist Galois CM fields $K$, of arbitrarily large degree, with totally real subfields $F$, such that  $\operatorname{rd}_{K/F}=\lambda$ and for each prime $p$ in $S_\mathbb Q$, every prime of $F$ lying over $p$ splits in the extension from $F$ to $K$, and $e(p)$ is the ramification index of $p$ in $F$, and $f(p)$ is at least the inertia degree of $p$ in $F$.

Then for any $R>1$, for $\delta$ defined by
\begin{equation}\label{delta-value} \delta=  \frac{  \log \left(1-\frac{1}{R}\right) + \frac{\log \left(\frac{2\pi}{e} \right)}{2} +\sum_{p\in S_{\mathbb Q}} \frac{1}{2e(p)f(p) } \log (k(p)+1)  - \frac{\log \lambda}{4} - \frac{\log \log \lambda}{2}}{ \log \left( 2R \prod_{p \in S_\mathbb Q} p^{\frac{k(p) }{2e(p)}} +1 \right)}\end{equation}
there are sets $U$ in the plane with $\#U$ arbitrarily large and \[ \# \{ (v_1,v_2) \in U \mid \abs{v_1-v_2}=1\} \geq  \frac{(\#U)^{1+\delta}}{8\lambda^2 } .\]

\end{prop}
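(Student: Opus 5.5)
The plan is to chain Lemma \ref{galois-case}, Lemma \ref{class-group-bound} and Lemma \ref{set-from-ideal}, and then to check that the exponent comes out exactly as $\delta$ in \eqref{delta-value}.

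First, fix one of the fields $K$ in the hypothesis, with $d=[F:\mathbb Q]$. Apply Lemma \ref{galois-case} with the given $S_{\mathbb Q}$ and $k$. The number of primes of $F$ above $p$ is $d/(e_pf_p)$ with $e_p=e(p)$ and $f_p\le f(p)$, and $k(p)+1\ge 2>1$, so we may replace the exponent $d/(e_pf_p)$ by the smaller $d/(e(p)f(p))$. This gives $I,\alpha$ with
\[ M\ \ge\ \frac{\prod_{p}(k(p)+1)^{d/(2e(p)f(p))\cdot 2}}{2^dh^-(K)} \qquad\text{and}\qquad \#(N_{K/F}(I)/(\alpha))=\prod_p p^{k(p)d/e(p)}. \]
Next, insert Lemma \ref{class-group-bound} with $\operatorname{rd}_{K/F}=\lambda$. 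This gives $2^dh^-(K)\le 8\lambda^2\bigl(\sqrt\lambda\log\lambda\,\tfrac{e}{2\pi}\bigr)^d$, and hence
\[ M\ \ge\ \frac{1}{8\lambda^2}\exp\Bigl(d\Bigl[\sum_p \tfrac{\log(k(p)+1)}{e(p)f(p)}+\log\tfrac{2\pi}{e}-\tfrac{\log\lambda}{2}-\log\log\lambda\Bigr]\Bigr). \]
Lemma \ref{set-from-ideal} then supplies a set $U$ with
\[ \#U\le \Bigl(2R\prod_p p^{k(p)/(2e(p))}+1\Bigr)^{2d}=:B^{2d} \qquad\text{and}\qquad \#\{\text{unit pairs}\}\ge (1-1/R)^{2d}M\,\#U. \]

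Write $A$ for the numerator in \eqref{delta-value}. Collecting the factors gives
\[ (1-1/R)^{2d}M\ \ge\ \frac{e^{2dA}}{8\lambda^2}, \]
and $\delta=A/\log B$, so $e^{2dA}=B^{2d\delta}$. If $\delta\ge 0$, then since $\#U\le B^{2d}$ we get $(\#U)^\delta\le B^{2d\delta}$, which yields the claimed bound $\#\{\text{unit pairs}\}\ge (\#U)^{1+\delta}/(8\lambda^2)$. If $\delta<0$ the statement is essentially trivial, or at least needs a lower bound on $\#U$; in any case only $\delta>0$ matters for the application. Note that $B>1$ trivially, so the sign of $\log B$ causes no trouble.

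The main obstacle is showing that $\#U$ is arbitrarily large, since Lemma \ref{set-from-ideal} only gives an upper bound on $\#U$. Two routes are available. In the first, the number of unit pairs is at least $e^{2dA}/(8\lambda^2)$ times $\#U$, and this tends to infinity with $d$ when $A>0$, while it is at most $(\#U)^2$; so $\#U\to\infty$ as the degree grows. In the second, we use the proof of Lemma \ref{set-from-lattice}, where $U\supseteq\pi(B(R-1,w)\cap\Lambda)$ contains many points. I would use the first route, restricting to the case $\delta>0$, which is the only one of interest.
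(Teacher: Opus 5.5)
Your proposal is correct and follows the paper's proof essentially step for step. The chain is the same: Lemma \ref{galois-case} with $f_p$ weakened to $f(p)$, then Lemma \ref{class-group-bound}, then Lemma \ref{set-from-ideal}, then the identity $e^{2dA}=B^{2d\delta}$, and finally unboundedness of $\#U$ because the ratio of unit pairs to $\#U$ tends to infinity while being at most $\#U$.

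The one loose end is your hedge on $\delta\le 0$, which also covers $\delta=0$, where your unboundedness argument fails. This case is trivial by a different construction: $n\ge 2$ collinear points at spacing $1$ give $2(n-1)\ge n\ge (\#U)^{1+\delta}/(8\lambda^2)$ unit pairs. This is what the paper's one-line dismissal amounts to.
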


\begin{proof} If $\delta \leq 0$ then the statement is trivial, so we may assume $\delta>0$.

We apply Lemma \ref{galois-case} to produce a fractional ideal $I$ and element $\alpha$ and then apply Lemma \ref{set-from-ideal}. We conclude that there exists a set $U \subseteq \mathbb R^2$ such that \[\# U \leq  \left( 2R (\# (N_{K/F}(I)/(\alpha)))^{\frac{1}{2d}} +1 \right)^{2d} \leq \Bigl( 2R \prod_{p \in S_\mathbb Q} p^{\frac{k(p) }{2e_p}} +1 \Bigr)^{2d} = \Bigl( 2R \prod_{p \in S_\mathbb Q} p^{\frac{k(p) }{2e(p)}} +1 \Bigr)^{2d} \] and  \[  \frac{\# \{ (v_1,v_2) \in U \mid \abs{v_1-v_2}=1\} }{ \# U} \geq \left(1-\frac{1}{R}\right)^{2d} M  \geq \left(1-\frac{1}{R}\right)^{2d} \frac{ \prod_{ p \in S_\mathbb Q} (k(p)+1  )^{\frac{d}{ e_p f_p}}}{ 2^d h^{-}(K)} \] \[\geq \left(1-\frac{1}{R}\right)^{2d} \frac{ \prod_{ p \in S_\mathbb Q} (k(p)+1  )^{\frac{d}{ e(p) f(p)}}}{  2^d \cdot 8 \lambda^2 \left(\sqrt{\lambda  } \log(\lambda) \frac{e}{4\pi} \right)^d  }\] applying Lemma \ref{class-group-bound} and using $e_p=e(p)$ and $f_p \leq f(p)$. Thus
\[    \frac{  \log( 8\lambda^2  \# \{ (v_1,v_2) \in U \mid \abs{v_1-v_2}=1\})}{\log \#U} \geq 1 + \frac{\log\left(  \left(1-\frac{1}{R}\right)^{2d} \frac{ \prod_{ p \in S_\mathbb Q} (k(p)+1  )^{\frac{d}{ e(p) f(p)}}}{ 2^d \left(\sqrt{\lambda} \log(\lambda) \frac{e}{4\pi} \right)^d }  \right)}{\log \left( \left( 2R \prod_{p \in S_\mathbb Q} p^{\frac{k(p) }{2e(p)}} +1 \right)^{2d}\right)}\]
\[ = 1 + \frac{  \log \left(1-\frac{1}{R}\right) + \frac{\log \left(\frac{2\pi}{e} \right)}{2} +\sum_{p\in S_{\mathbb Q}} \frac{1}{2e(p)f(p) } \log (k(p)+1)  - \frac{\log \lambda}{4} - \frac{\log \log \lambda}{2} }{ \log \left( 2R \prod_{p \in S_\mathbb Q} p^{\frac{k(p) }{2e(p)}} +1 \right)} = 1+\delta.\]
Furthermore, since $\delta>0$, the lower bound for  $  \frac{\# \{ (v_1,v_2) \in U \mid \abs{v_1-v_2}=1\} }{ \# U} $ is growing exponentially in $d$ and thus we can take $  \frac{\# \{ (v_1,v_2) \in U \mid \abs{v_1-v_2}=1\} }{ \# U} $ arbitrarily large. Since this quantity is certainly at most $\#U$, we have that $\#U$ is arbitrarily large. \end{proof}

The next two lemmas will produce suitable fields $F$ and $K$ to apply Proposition \ref{main-sequence-criterion}. The strategy is to start with a quadratic field $Q$ and choose $K$ and $F$ to be extensions of $Q$ unramified at all finite places. We will choose $f(p)=2$ for all $p$ in $S_\mathbb Q$, so that to apply Proposition \ref{main-sequence-criterion}, we need to check that there are infinitely many totally real fields $F$ that are everywhere unramified extensions of $Q$ such that for each prime $p$ in $S_{\mathbb Q}$, the inertia degree of $p$ in $F$ is at most $2$.  This is done in Lemma \ref{field-existence} using Galois-group calculations in Lemma \ref{group-infinite}.

\begin{lemma}\label{group-infinite} Let $T$ be a finite set of odd primes and $S_{\mathbb Q}$ a finite set of prime numbers. Assume that the number of elements of $T$ which are congruent to $3$ modulo $4$ is odd. Let $Q = \mathbb Q( \sqrt{ \prod_{q\in T} q })$.  Let $G$ be the Galois group of the composition of all Galois extensions of $Q$, with degree a power of $2$, that are everywhere unramified, totally real, and such that for all primes $\mathfrak p$ of $Q$ lying over $p \in S_{\mathbb Q}$, the inertia degrees of $\mathfrak p$ in the extensions is at most $2$, and is at most $1$ if $p$ is inert in $Q$.

Let $d(G)$ be the minimal number of generators for $G$ as a pro-$2$ group and $r(G)$ the minimal number of relations of $G$ when expressed by the minimal number of generators. Then:

\begin{enumerate} 
\item The field  $\mathbb Q( \{\sqrt{q} \mid q \in T\})$ is a Galois extension of $Q$, with Galois group $(\mathbb Z/2\mathbb Z)^{ \#T-1}$, that is everywhere unramified and totally real. For all primes $\mathfrak p$ of $Q$ lying over $p \in S_{\mathbb Q}$, the inertia degree of $\mathfrak p$ in the extension $\mathbb Q( \{\sqrt{q} \mid q \in T\})$  is at most $2$, and is at most $1$ if $p$ is inert in $Q$.
\item We have \[ d(G) \geq \#T -1 .\]
\item We have \[ r(G) \leq d(G) +  \# S_\mathbb Q + \# \{p \in S_\mathbb Q \mid p \textrm{ split in }Q \}+2 .\]
\item If  \begin{equation}\label{infinitude-criterion} \#T  + \# S_\mathbb Q + \# \{p \in S_\mathbb Q \mid p \textrm{ split in }Q \} +1 \leq \frac{ (\#T -1)^2}{4} \end{equation} then $G$ is infinite.
\end{enumerate}
\end{lemma}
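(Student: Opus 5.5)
The plan is to treat the four parts in order. Parts (1) and (2) come from genus theory, part (3) from a Shafarevich--Koch relation bound plus counting the extra Frobenius relations, and part (4) from the Golod--Shafarevich inequality. Write $D=\prod_{q\in T} q$. Since an odd number of the $q$ are $3 \bmod 4$, we have $D\equiv 3 \pmod 4$, so $2$ ramifies in $Q$.

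\textbf{Part (1).} For each $q\in T$ let $q'=q$ if $q\equiv 1\pmod 4$ and $q'=qD$ if $q\equiv 3 \pmod 4$. Then $q'\equiv 1\pmod 4$ and $Q(\sqrt q)=Q(\sqrt{q'})$, so every quadratic subextension $Q(\sqrt{q})/Q$ is unramified above $2$. At a prime $q\in T$, the element $D/q$ is a $q$-adic unit. Locally we have $\sqrt q=\sqrt D/\sqrt{D/q}$, and adjoining the square root of a unit is unramified, so there is no ramification above $q$. Primes outside $T\cup\{2\}$ are clearly unramified. All $\sqrt q$ are real, so the extension is totally real. Kummer theory gives that the $\#T$ square roots are independent over $\mathbb Q$, so $\Gal=(\mathbb Z/2\mathbb Z)^{\#T}$ over $\mathbb Q$ and $(\mathbb Z/2\mathbb Z)^{\#T-1}$ over $Q$.

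For inertia degrees: over $Q$ the extension is unramified and elementary abelian, so each decomposition group is cyclic and has order at most $2$. If $p$ is inert in $Q$, apply the same argument over $\mathbb Q$, where the decomposition group modulo inertia is cyclic in an elementary abelian $2$-group. This gives residue degree at most $2$ over $\mathbb Q$, and that degree is already used up in $Q$, so the residue degree over $Q$ is $1$.

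\textbf{Part (2).} By (1) this field is one of the fields in the composite defining $G$. Hence $G$ surjects onto $(\mathbb Z/2\mathbb Z)^{\#T-1}$, so $\dim G/\Phi(G)\ge \#T-1$.

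\textbf{Part (3).} This is the main obstacle. Let $G_0$ be the Galois group of the maximal $2$-extension of $Q$ that is unramified at all finite places and split at both real places. I will quote the Shafarevich--Koch bound for such groups:
\[ r(G_0)-d(G_0)\le r_1+r_2-1+\theta+\#\{\text{archimedean places required to split}\},\]
where $\theta=1$ because $\mu_2\subset Q$. I expect this to yield $r(G_0)\le d(G_0)+2$. Pinning down exactly this constant from the literature (e.g.\ Koch's book, or Hajir--Maire--Ramakrishna) is the delicate point. If the citable version is weaker, I would rederive it from the Poitou--Tate or Koch presentation with the unit group of rank $1$.

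Next, $G$ is the quotient of $G_0$ by the normal subgroup generated by the elements $\mathrm{Frob}_{\mathfrak p}^2$ for $\mathfrak p$ above $p\in S_{\mathbb Q}$, with $\mathrm{Frob}_{\mathfrak p}$ itself used when $p$ is inert. These are well defined up to conjugacy since everything is unramified. The number of such $\mathfrak p$ is $\#S_{\mathbb Q}+\#\{p \text{ split}\}$. I then use the standard fact that killing $m$ elements of a pro-$2$ group $G_0$ gives a quotient $H$ with $r(H)-d(H)\le r(G_0)-d(G_0)+m$. This can be seen via $H^2$ and $H^1$ in the inflation--restriction five-term sequence. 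Together these give (3).

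\textbf{Part (4).} Suppose $G$ is finite. The Golod--Shafarevich inequality gives $r(G)>d(G)^2/4$, and combining with (3) yields
\[ d^2/4-d<\#S_{\mathbb Q}+\#\{p\text{ split}\}+2.\]
Hypothesis \eqref{infinitude-criterion} forces $\#T-1\ge 2$. The function $x^2/4-x$ is increasing for $x\ge2$, so by (2) we get $d^2/4-d\ge (\#T-1)^2/4-(\#T-1)$. The hypothesis \eqref{infinitude-criterion} says exactly that this right side is at least $\#S_{\mathbb Q}+\#\{p\text{ split}\}+2$. This contradicts the displayed strict inequality, so $G$ is infinite.
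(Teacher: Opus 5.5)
Your outline is the paper's own proof: genus theory for (1) and (2), then a Shafarevich bound for the everywhere-unramified totally real $2$-tower plus one extra relation per killed Frobenius (or Frobenius square) for (3), then the Gasch\"utz--Vinberg form of Golod--Shafarevich plus monotonicity of $x^2/4-x$ for (4). Parts (1), (2), (4) and the relation-counting step in (3) are fine.

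The gap is the base inequality $r(G_0)\le d(G_0)+2$, which you never establish. The formula you quote has an extra term $\#\{\text{archimedean places required to split}\}$. For the real quadratic field $Q$ it evaluates to $2+0-1+1+2=4$, and you only say you ``expect'' it to yield $2$.

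The constant matters. Write $m=\#S_{\mathbb Q}+\#\{p\in S_{\mathbb Q}\mid p\text{ split in }Q\}$. With $4$ in place of $2$, part (3) would read $r(G)\le d(G)+m+4$, and part (4) would need $\#T+m+3\le (\#T-1)^2/4$. That fails for the parameters used to prove the main theorem ($\#T=13$, $\#S_{\mathbb Q}=22$, $m=22$), where the stated criterion holds with equality, $36=36$.

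The missing observation is that keeping the real places real costs no extra relations. If a real place becoming complex counts as ramification, then $G_0$ is simply the group of the maximal $2$-extension unramified everywhere. That is the case $S=T=\emptyset$ in \cite[Theorem 10.7.12]{Neukirch2008}, which gives $1-d(G_0)+r(G_0)\le r_1+r_2+\theta=3$.

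Equivalently, this is the classical Shafarevich bound $r(G_0)-d(G_0)\le \operatorname{rk}\mathcal O_Q^\times+1=2$ for the wide-sense $2$-class field tower. Here $d(G_0)$ is the $2$-rank of the ordinary class group. The Kummer group $\{x\in Q^\times : (x)=\mathfrak a^2\}/Q^{\times 2}$, which bounds $H^2$, has dimension $d(G_0)+2$ and carries no sign conditions at the real places. With this input in place of your formula, the rest of your argument goes through as written.
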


\begin{proof} For part (1), we consider the extension $\mathbb Q( \{\sqrt{q} \mid q \in T\})$ of $\mathbb Q$, which certainly contains $Q$. The extension $\mathbb Q( \{\sqrt{q} \mid q \in T\})/\mathbb Q$ is Galois with Galois group $(\mathbb Z/2\mathbb Z)^{ \#T}$, so the extension  $\mathbb Q( \{\sqrt{q} \mid q \in T\})/Q$ is Galois with Galois group the unique, up to isomorphism, index $2$ subgroup $(\mathbb Z/2\mathbb Z)^{ \#T-1}$ of $(\mathbb Z/2\mathbb Z)^{ \#T}$.

To check that $\mathbb Q( \{\sqrt{q} \mid q \in T\})/Q$ is everywhere unramified, it suffices to check that $Q(\sqrt{q})/Q$ is unramified for each $q$ as the composition of everywhere unramified extensions is everywhere unramified. Every ramified place of $Q(\sqrt{q})/Q$  must lie over a ramified place of $\mathbb Q(\sqrt{q})/\mathbb Q$. On the other hand, $Q(\sqrt{q})/Q=Q(\sqrt{ \prod_{q'\in T\setminus\{q\}} q'})/Q$ and hence every ramified place of $Q(\sqrt{q})/Q$  must lie over a ramified place of $\mathbb Q(\sqrt{ \prod_{q'\in T\setminus\{q\}} q'})/\mathbb Q$. No odd place is ramified in both since no prime divides both $q$ and $\prod_{q'\in T\setminus\{q\}} q'$ and $2$ is not ramified in both since only one of $q$ and $\prod_{q'\in T\setminus\{q\}} q'$ is not congruent to $1$ modulo $4$ since the number of elements of $T$ congruent to $3$ mod $4$ is odd. 

$\mathbb Q( \{\sqrt{q} \mid q \in T\})$ is certainly totally real. As a composition of quadratic extensions of $\mathbb Q$, the inertia degree of each prime of $\mathbb Q$ in $\mathbb Q( \{\sqrt{q} \mid q \in T\})$ is at most $2$. Since inertia degrees are multiplicative in field extensions, the inertia degree of each prime of $Q$ in $\mathbb Q( \{\sqrt{q} \mid q \in T\})$ is at most $2$, and is at most $1$ if the prime is inert over $\mathbb Q$.

For part (2), by part (1), the Galois group $(\mathbb Z/2\mathbb Z)^{ \#T-1}$ of  $\mathbb Q( \{\sqrt{q} \mid q \in T\})/Q$  is a quotient of $G$. It follows that $d(G) \geq \#T -1$.

For part (3), we first observe that $ \# S_\mathbb Q + \# \{p \in S_\mathbb Q \mid p \textrm{ split in }Q \} $ is the number of primes in $Q$ that lie over primes in $S_{\mathbb Q}$, as there are two primes lying over each split prime and one lying over each other prime. We next observe that $G$ is the quotient of the Galois group of the composition of all Galois extensions of $Q$, with degree a power of $2$, that are everywhere unramified and totally real, by the normal closure of the subgroup generate $ \# S_\mathbb Q  + \# \{p \in S_\mathbb Q \mid p \textrm{ split in }Q \} $ elements, those being the Frobenius elements of each prime lying over an inert prime of $S_{\mathbb Q}$ and the square of the Frobenius of each prime lying over another prime of $S_{\mathbb Q}$. Each time we quotient by the normal subgroup generated by an element by increases the minimal number of relations by at most $1$,  unless it is a generator, in which case it reduces the minimal number of generators by $1$ and does not increase the minimal number of relations. Thus it suffices to show that, for the Galois group $G'$ of the composition of all Galois extensions of $Q$, with degree a power of $2$, that are everywhere unramified and totally real, we have $r(G')\leq d(G')+2$. This follows from \cite[Theorem 10.7.12]{Neukirch2008} since in that theorem we may take $S =T=\emptyset$ since by convention $\mathbb C$ is a ramified extension of $\mathbb R$ and then $r=2$ and $\theta=1$, and the other terms vanish, so the Euler characteristic is at most $3$, and the Euler characteristic is $1+r(G')-d(G')$ so $r(G')-d(G')\leq 3-1=2$. 

For part (4), the Golod-Shafarevich theorem~\cite{GolodShafarevich} in its refined form due to Gasch\"utz and Vinberg~\cite{Vinberg1965,Koch1969} states that $G$ is infinite if $r(G) \leq \frac{ d(G)^2}{4}$ and thus if $r(G) -d(G) \leq \frac{d(G)^2}{4} - d(G)$, so by part (3), $G$ is infinite as long as 
\begin{equation}\label{sheared-gs} \# S_\mathbb Q + \# \{p \in S_\mathbb Q \mid p \textrm{ split in }Q \} +2  \leq  \frac{d(G)^2}{4} - d(G). \end{equation} 
\eqref{sheared-gs} is never satisfied if $d(G) <4$ as the left hand side is positive and the right hand side is not, and the right hand side of \eqref{sheared-gs} is increasing for $d(G) \geq 4$, so we may replace $d(G)$ on the right hand side of \eqref{sheared-gs} by the lower bound $\#T-1$ for $d(G)$ coming from part (2), obtaining that $G$ is infinite as long as
\[  \# S_\mathbb Q + \# \{p \in S_\mathbb Q \mid p \textrm{ split in }Q \} +2 \leq  \frac{ (\#T -1)^2}{4}  - (\#T -1),\] as desired.
\end{proof}

\begin{lemma}\label{field-existence} Let $T$ be a finite set of odd primes. Assume that the number of elements of $T$ which are congruent to $3$ modulo $4$ is odd. Let $S_{\mathbb Q}$ be a finite set of primes of $\mathbb Q$, and assume that each prime of $S_{\mathbb Q}$ is either inert in $\mathbb Q(\sqrt{q})$ for some $q\in T$ or congruent to $1$ mod $4$. Let $Q=\mathbb Q(\sqrt{\prod_{q\in T} q})$.

If \eqref{infinitude-criterion} is satisfied then there exist Galois extensions $F$ of $\mathbb Q$ of arbitrarily large degree that are totally real, such that for $K = F(\sqrt{-1})$ we have $\operatorname{rd}_{K/F} = \sqrt{ 4 \prod_{q\in T} q}$, and for each prime $p$ in $S_{\mathbb Q}$, every prime of $F$ lying over $p$ splits in the extension from $F$ to $K$, the ramification index of $p$ in $F$ is $2$ if $p\in T$ or $p=2$ and $1$ otherwise, and the inertia degree of $p$ in $F$ is at most $2$. \end{lemma}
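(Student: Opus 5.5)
The plan is to take $F$ to be a finite layer of the tower whose Galois group is the group $G$ of Lemma \ref{group-infinite}, and then verify the local conditions using only the fact that $F/Q$ is unramified at all finite places.

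\textbf{Constructing $F$.} Let $\Omega$ be the compositum defining $G$ in Lemma \ref{group-infinite}, so $\Gal(\Omega/Q)=G$. By part (4) of that lemma, $G$ is infinite.
\begin{enumerate}
\item $\Omega$ is Galois over $\mathbb Q$. Conjugating by an element of $\Gal(\overline{\mathbb Q}/\mathbb Q)$ preserves $Q$ and each defining property: degree a power of $2$, unramified everywhere, totally real, and the inertia bounds at primes over $S_{\mathbb Q}$ (this set is stable, and splitting behaviour over $\mathbb Q$ is preserved).
\item $G$ is finitely generated, since $d(G)$ is finite by Lemma \ref{group-infinite}.
\item The terms $G_n$ of the lower $2$-central series are characteristic, of finite index, and have trivial intersection.
\item Let $F_n=\Omega^{G_n}$. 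Each $F_n$ is Galois over $\mathbb Q$, totally real, and unramified over $Q$ at all finite places.
\item Because $G$ is infinite, $[F_n:\mathbb Q]\to\infty$.
\end{enumerate}

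\textbf{Ramification and inertia.}
\begin{itemize}
\item Put $P=\prod_{q\in T}q$. Since an odd number of the $q$ are $3$ mod $4$, we have $P\equiv 3 \pmod 4$, so $\Delta_Q=4P$. Hence $2$ and the primes of $T$ have ramification index $2$ in $Q$, and every other prime is unramified.
\item Since $F/Q$ is unramified at finite places, these ramification indices are the ones in $F$.
\item The inertia-degree bound comes from the definition of $G$. If $p$ is split or ramified in $Q$, its inertia degree in $Q$ is $1$, and in $F/Q$ it is at most $2$. If $p$ is inert in $Q$, its inertia degree in $Q$ is $2$, and in $F/Q$ it is at most $1$. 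In all cases the inertia degree of $p$ in $F$ is at most $2$.
\end{itemize}

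\textbf{The relative root discriminant.} Let $L=\mathbb Q(i,\sqrt P)$. Its three quadratic subfields have discriminants $-4$, $4P$, and $P$; the last is $\mathbb Q(\sqrt{-P})$, and $-P\equiv 1 \pmod 4$. The conductor–discriminant formula gives $\Delta_L=16P^2$, so $\operatorname{rd}_L=2\sqrt P$.

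Now $K=F(i)=FL$, and $K/L$ is unramified at finite places since $F/Q$ is. Root discriminants are unchanged in unramified extensions, so $\Delta_K=(4P)^{d}$ and $\Delta_F=(4P)^{d/2}$. Therefore $\operatorname{rd}_{K/F}=\sqrt{4P}$.

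\textbf{Splitting, the main obstacle.} Since $\sqrt P\in F$, we have $K=F(\sqrt{-P})$. Because $\mathbb Q(\sqrt{-P})$ has odd discriminant $P$, the extension $K/F$ is unramified at every prime not dividing $P$; at primes over $q\in T$ it is also unramified, because $K=F(i)$ and $q$ is odd. So $K/F$ is unramified at all finite places.

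A prime $\mathfrak P$ of $F$ above $p$ therefore splits in $K$ exactly when $F_{\mathfrak P}$ contains the local field $\mathbb Q_p(\sqrt{-P})$. This local field is $\mathbb Q_p$ or the unramified quadratic extension of $\mathbb Q_p$. I would split into two cases from the hypothesis on $S_{\mathbb Q}$.
\begin{itemize}
\item If $p\equiv 1 \pmod 4$, then $i\in\mathbb Q_p\subseteq F_{\mathfrak P}$, so $\mathfrak P$ splits.
\item If $p$ is inert in some $\mathbb Q(\sqrt q)$ with $q\in T$, then $F\supseteq\mathbb Q(\sqrt q)$, because $F$ contains the field of Lemma \ref{group-infinite}(1) whenever the chosen layer is large enough. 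So $F_{\mathfrak P}$ contains the unramified quadratic extension of $\mathbb Q_p$, and hence contains $\mathbb Q_p(\sqrt{-P})$. This includes $p=2$ with $q\equiv 5 \pmod 8$, and $p\in T$.
\end{itemize}
The delicate step is the case $p=2$. There $\mathbb Q_2(i)$ is ramified, so one must use the description $K=F(\sqrt{-P})$ rather than $F(i)$ to see that $K/F$ is unramified at $2$.
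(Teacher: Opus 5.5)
Most of your proposal is correct, and parts of it are legitimate variants of the paper's argument. You use the characteristic subgroups $G_n$ of the lower $2$-central series to get $F_n/\mathbb Q$ Galois; the paper instead passes from $F'$ to $F'F^*$. You compute $\operatorname{rd}_{K/F}$ via $L=\mathbb Q(i,\sqrt P)$; the paper instead observes that $\Delta_K=\Delta_F^2$ because $K/F$ is unramified at finite places. (A small slip: $\mathbb Q(\sqrt{-P})$ has discriminant $-P$, not $P$; it is $(-4)(4P)(-P)$ that equals $16P^2$.)

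The genuine problem is in the splitting step. You claim that $\mathbb Q_p(\sqrt{-P})$ is always either $\mathbb Q_p$ or the unramified quadratic extension of $\mathbb Q_p$, and you explicitly use this for $p\in T$. For $p\in T$ this is false. There $p$ divides $P$ exactly once, so $\mathbb Q_p(\sqrt{-P})/\mathbb Q_p$ is ramified. Knowing that $F_{\mathfrak P}$ contains the unramified quadratic extension of $\mathbb Q_p$ does not by itself give $\sqrt{-P}\in F_{\mathfrak P}$.

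This case cannot be dismissed. A prime $p\in T$ with $p\equiv 3 \pmod 4$ can lie in $S_{\mathbb Q}$ only through the hypothesis ``inert in some $\mathbb Q(\sqrt q)$''. That is exactly how $3,7,11,19,23$ enter in the proof of Theorem \ref{main}.

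The fix uses the device you already used for unramifiedness. Since $\sqrt P\in F$, $-P$ is a square in $F_{\mathfrak P}$ if and only if $-1$ is. For odd $p$, the field $\mathbb Q_p(i)$ is $\mathbb Q_p$ or the unramified quadratic extension. So for odd $p$, run your argument with $\mathbb Q_p(i)$: an even residue degree of $F_{\mathfrak P}$ forces $i\in F_{\mathfrak P}$. Use $\mathbb Q_2(\sqrt{-P})$ only at $p=2$.

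With that change the proof is complete. It is then a local version of the paper's argument, which combines two facts: $K/F$ is unramified, and the residue degree of $p$ in $K=F\cdot\mathbb Q(i)$ equals that in $F$ once the latter is even. At $p=2$, both $F$ and $\mathbb Q(i)$ are ramified, and a bare lcm-of-residue-degrees rule can fail there. Your formulation via $\mathbb Q_2(\sqrt{-P})$ is the more careful justification at that prime.
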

 
 \begin{proof} Since $\prod_{q\in T} q$ is squarefree and congruent to $3$ mod $4$, we have $\Delta_Q = 4 \prod_{q\in T} q$.
 
 The group $G$ defined in Lemma \ref{group-infinite} is infinite since we have assumed that $S_\mathbb Q$ and $T$ satisfy the condition \eqref{infinitude-criterion} of Lemma \ref{group-infinite}(4). Since $G$ is a profinite group, it has arbitrarily large finite quotients that surject onto the quotient $\Gal(\mathbb Q( \{\sqrt{q} \mid q \in T\})/Q)$ of $G$ obtained from Lemma \ref{group-infinite}(1). Each is associated to a Galois extension $F'$ of $Q$ that is everywhere unramified and totally real, and such that for all primes $\mathfrak p$ of $Q$ lying over $p \in S_{\mathbb Q}$, the inertia degree of $\mathfrak p$ in the extension is at most $2$, and is at most $1$ if $p$ is inert in $Q$. The extension $F'$ need not be Galois over $\mathbb Q$, but if we let $F^*$ be the pullback of $F'$ along the unique nontrivial automorphism of $Q$ then $F^*$ has all the same properties as $F'$ and, since these properties are stable under composition, $F' F^*$ does as well, and $F' F^*$ is Galois. Thus, let $F = F' F^*$. The degree of $F$ is at least the degree of $F'$ and hence may be arbitrarily large.
 
To check $K/F$ is unramified at all finite places, we observe that $K = F(\sqrt{-1})$ is unramified over $F$ away from places above $2$, but also $K=F(\sqrt{ -\prod_{q\in T}q})$ and $-\prod_{q\in T}q$ is congruent to $1$ mod $4$ so $ K/F$ is unramified over $F$ at $2$. Since $F$ is unramified over $Q$, we have
\[ \frac{\Delta_K}{\Delta_F}  = \frac{\Delta_F^2}{\Delta_F}=\Delta_F = \Delta_Q^{[F:Q]}= \Delta_Q^{\frac{[F:\mathbb Q]}{2}}\]
so $\operatorname{rd}_{K/F} = \sqrt{ 4 \prod_{q\in T} q}$.

For each prime $p$ in $S_{\mathbb Q}$, if $p$ is inert in $\mathbb Q(\sqrt{q})$ for some $q\in T$ then $p$ has inertia degree divisible by $2$ in the extension $\mathbb Q( \{\sqrt{q} \mid q \in T\})/\mathbb Q$, as this contains $Q(\sqrt{q})$, and thus has inertia degree divisible by $2$ in $F/\mathbb Q$. Since the inertia degree of a composition of two extensions is the least common multiple of the inertia degrees, and $p$ has inertia degree at most $2$ in $\mathbb Q(\sqrt{-1})/\mathbb Q$, the inertia degree of $p$ in $K/\mathbb Q$ is equal to the inertia degree of $p$ in $F/\mathbb Q$. Combined with the fact that $K$ is unramified over $F$, we conclude that all primes lying over $p$ in $F$ are split in $K/F$.  If $p$ is congruent to $1$ mod $4$, then all primes lying over $p$ are split in $K/F$ since $p$ is already split in $\mathbb Q(i)$. 

Since $F/Q$ is unramified everywhere, the ramification index of $p$ in $F$ equals the ramification index of $p$ in $Q$, which is $2$ if $p\in T$ or $p=2$ and $1$ otherwise.

The inertia degree of $p$ in $F/\mathbb Q$ equals the inertia degree in $Q/\mathbb Q$ of $p$ times the inertia degree in $F/Q$ of a prime lying over $p$ in $Q$. The inertia degree in $Q/\mathbb Q$ of $p$ is $2$ if $p$ is inert in $Q$ and $1$ otherwise, and we have constructed $F$ so that the inertia degree in $F/Q$ of $p$ is $1$ if $p$ is inert in $Q$ and at most $2$ otherwise, so the inertia degree of $p$ in $F/\mathbb Q$ is at most $2$, as desired. \end{proof}

\begin{proof}[Proof of Theorem \ref{main}]

By Lemma \ref{field-existence}, we see that if $T$ is a finite set of odd  primes such that the number of elements of $T$ which are congruent to $3$ modulo $4$ is odd and $S_{\mathbb Q}$ is a finite set of primes of $\mathbb Q$, such that each prime of $S_{\mathbb Q}$ is either inert in $\mathbb Q(\sqrt{q})$ for some $q\in T$ or congruent to $1$ mod $4$, and satisfying \eqref{infinitude-criterion}, we can take
\begin{equation}\label{delta-specialized} \delta=  \frac{  \log \left(1-\frac{1}{R}\right) + \frac{\log \left(\frac{2\pi}{e} \right)}{2} +\sum_{p\in S_{\mathbb Q}} \frac{1}{4e(p) } \log (k(p)+1)  - \frac{\log (4 \prod_{q\in T}q )}{8} -\frac{\log( \log \sqrt{4 \prod_{q\in T}q})}{2} }{ \log \left( 2R \prod_{p \in S_\mathbb Q} p^{\frac{k(p) }{2e(p)}} +1 \right)}\end{equation}
for any $R>1$ in Proposition \ref{main-sequence-criterion}. 

If we take \[T = \{ 3,5, 7, 11, 13, 17,19, 23, 29, 31,37,41, 43\}\] and \[S_\mathbb Q =\{2,3,5,7,11,13, 17,19,23,29,  47, 71, 79, 97, 101,107, 109,139,151,163, 167, 179 \}\] then we have $\#T = 13$ and $\#S_\mathbb Q =22$. The number of elements of $T$ congruent to $3$ mod $4$ is $7$, which is odd. The $10$ smallest primes in $S_\mathbb Q$ are ramified in $Q$ and the $12$ largest primes in $S_\mathbb Q $ are inert in $Q$, as can be computed in Sage\cprotect\footnote{by running \verb|[kronecker(3*5*7*11*13*17*19*23*29*31*37*41*43,x) for x in|\\ \verb|[47,71,79,97,101,107,109,139,151,163,167,179]]|}. Thus no primes in $S_{\mathbb Q}$ are split in $Q$ and we have $\#T + \#S_{\mathbb Q} +1=36 = \frac{12^2}{4} = \frac{ (\#T-1)^2}{4}$ so \eqref{infinitude-criterion} is satisfied. The primes inert in $Q$ are automatically inert in $\mathbb Q(\sqrt{q})$ for some $q\in T$, and we can check that all the ramified primes are either congruent to $1$ mod $4$, or inert in $\mathbb Q(\sqrt{q})$ for $q=3,5,$ or $7$. Taking $R= 72$ and $k(2)=50, k(3)=31, k(5)=21, k(7)=17, k(11)=14, k(13)=13, k(17)=12, k(19)=11, k(23)=10, k(29)=10, k(47)=8,  k(71)=k(79)=k(97)=k(101)=k(107)=k(109)=7, k(139)=k(151)=k(163)=k(167)=k(179)=6$, the numerator in \eqref{delta-specialized} is $3.8822\dots$ and the denominator is $275.055\dots$ so we can take $\delta=.014114\dots$ in Proposition \ref{main-sequence-criterion}, giving the statement.\end{proof}

The values $k(p)$ were obtained by the formula $k(p)= \lfloor \frac{1}{p^{ t^{-1}}-1} \rfloor$ and $R$ by the formula $R=2t+1$, for $t=35.5$. These are the optimal values if the quantity to be optimized were $2c$ times the numerator of \eqref{delta-specialized} minus the denominator of \eqref{delta-specialized}, which if $c$ is chosen well should closely approximate the optimizers for \eqref{delta-specialized}, and $t$ was chosen by trying a few values and taking the best one. $T$ consists of the first few primes while $S_\mathbb Q$ consists of the first few ramified primes in $Q$ and the first few inert primes. None of these were optimized very carefully.

\section{Discussion}

We first explain in more detail how our arguments differ from those~\cite{machinewriteup} obtained by Lijie Chen using an internal OpenAI model, with a simplification suggested by Victor Wang:

In Lemma \ref{set-from-lattice}, we apply the probabilistic method to bound the ratio $\# \{(v_1,v_2)\in U^2 \mid \abs{v_1-v_2}=1 \}/\#U$ instead of the value $\# \{(v_1,v_2)\in U^2 \mid \abs{v_1-v_2}=1 \}$.  Note that the improvement here is proportional to the ratio between the average value of $\#U$ and the upper bound on $\#U$ obtained in the argument, and the upper bound on $\#U$ obtained in either argument is much larger than the average value.

In Lemma \ref{lattice-from-ideal}, we work with an ideal instead of simply the ring of integers.

In Lemma \ref{ideal-from-primes}, the pigeonhole argument, we make several improvements. First, we do not require the prime ideals $\mathfrak p$ of $F$ to be split over $\mathbb Q$. This assumption was made in the OpenAI team's proof  and likely led to significant loss as it means one can only use primes split in the field extension $\mathbb Q(\{ \sqrt{q}\mid q \in T\})$, and the smallest primes split in this extension are typically exponentially large in $T$. Second, we introduce the parameter $k(\mathfrak p)$ that allows taking powers of the prime ideal, allowing one to get more leverage out of fewer primes. (This idea was also used to simplify the argument in the human-created writeup~\cite{humanwriteup}.) Third, the fact that we work with a fractional ideal means we don't have to embed the optimal fractional ideal into a principal ideal, incurring further loss. Fourth, once we obtain a large number of elements whose norms to $F$ generate the same ideal, we do not divide them by their complex conjugates to obtain elements with the same norm, but instead use the unit group to control them, in the form of Lemma \ref{modified-class-group}. Fifth, we use the relative class group instead of the class group for pigeonholing.

In Lemma \ref{class-group-bound}, we use \cite{Louboutin2000} instead of the Minkowski bound to get an improved bound on the (relative) class number. 

In Lemma \ref{group-infinite}, we work with the maximal unramified extension of $Q$ instead of the maximal extension of $\mathbb Q$ ramified only at the primes in $T$, which has substantially smaller root discriminant. We also avoid the assumption that $p\in S_\mathbb Q$ are congruent to $1$ mod $4$, which was used only to ensure that $\mathbb Q(\sqrt{p})$ is unramified at $2$, so $Q(\sqrt{p})/Q$ is unramified, by instead ensuring that $Q$ is ramified at $2$.

The flexibility to use primes that are not split is used heavily in the proof of Theorem \ref{main}. In fact, every prime in $S_{\mathbb Q}$ is either inert or ramified in $Q$.

\begin{remark} We explain how it is possible to prove a version of Theorem \ref{main} which shows that a set $U$ exists with $\#U=n$ and $\# \{ (v_1,v_2) \in U \mid \abs{v_1-v_2}=1\} \geq n^{1+\delta}/O(1)$ for many $n\leq N$. To do this, observe that the construction of Proposition \ref{main-sequence-criterion} produces, for some $X > 1$, for $d$ the degree of a suitable field $F$, a set with \[\#U \leq X^d \textrm{ and } \frac{ \# \{ (v_1,v_2) \in U \mid \abs{v_1-v_2}=1\} }{\#U }\geq X^{\delta d }/O(1).\] Next observe that we can adapt Lemma \ref{field-existence} to produce fields whose degree is any fixed sufficiently large power of $2$, since $\Gal(F/\mathbb Q)$ is a $2$-group, $2$-groups always contain central elements of order two in any nontrivial subgroup, and taking the fixed field of a central element of order $2$ in  $\Gal(F/\mathbb Q)$ that lies in the kernel of the natural map to $\Gal(\mathbb Q( \{\sqrt{q} \mid q \in T\})/\mathbb Q)$ produces a field of degree smaller by a factor of $2$ that has all the same properties. Thus for $N$ sufficiently large we can take 
\[d= 2^{\bigl\lfloor \frac{ \log \left( \frac{\log N}{\log X}\right)}{\log 2} \bigr \rfloor}\] so that $\#U \leq X^d \leq N$ and then we have 
\[ \frac{\# \{ (v_1,v_2) \in U \mid \abs{v_1-v_2}=1\} }{\#U} \geq \frac{X^{\delta  d}}{O(1)} \geq  \frac{X^{\delta \cdot  2^{\bigl\lfloor \frac{ \log \left( \frac{\log N}{\log X}\right)}{\log 2} \bigr\rfloor }} }{O(1)}\geq \frac{X^{ \delta \cdot 2^{ \frac{ \log \left( \frac{\log N}{\log X}\right)}{\log 2}  -1} }}{O(1)}= \frac{N^{ \frac{\delta}{2}}}{O(1)}\]
so $\#U \geq \frac{N^{\frac{3\delta}{2}}}{O(1)}$ by \cite{Spencer1983}. So we can always take some $U$ of size $n$ for some $n \in [N^{\frac{3\delta}{2}}/O(1),N]$. Furthermore, a random subset of $U$ consisting of at least half the elements will again have many pairs of points at unit distance, so we show such sets exist for at least $N^{\frac{3\delta}{2}}/O(1)$ values of $n\leq N$.\end{remark}

\begin{remark} We discuss a final question: What are the limits for the exponents that can be obtained by this argument? To make this into a precise question, set aside the question of improvements to Lemmas \ref{set-from-ideal} and \ref{ideal-from-primes}. Combining Lemmas \ref{set-from-ideal} and \ref{ideal-from-primes}, one obtains an exponent of
\begin{equation}\label{bound-with-class-number} 1+ \frac{ \log \left(1-\frac{1}{R}\right) +  \frac{1}{2d} \log \left( \frac{ \prod_{\mathfrak p \in S_F} (k(\mathfrak p)+1  )}{ 2^d h^{-}(K)} \right) }{\log \left( 2R\prod_{\mathfrak p\in S_F} \# (\mathcal O_F/\mathfrak p)^{\frac{k(\mathfrak p)}{2d}} +1 \right)}   \end{equation}
given a totally real field $F$ of degree $d$, CM extension $K$ of $F$, set $S_F$ of primes of $F$ consisting only of primes split in $K$, function $k$ from $S_F$ to positive integers, and real $R>1$. One precise formulation of the question is to ask for the best possible upper bound on \eqref{bound-with-class-number}, for arbitrary $F,K, S_F, k, R$ satisfying these conditions, for large $d$. While our argument makes use of upper bounds for the relative class number, solving this problem requires lower bounds for the relative class number, which are more subtle due to the potential for Siegel zeros. (Of course, it is also interesting to obtain bounds for \eqref{bound-with-class-number} conditional on the generalized Riemann hypothesis, as they will still be upper bounds for what can be obtained from the above argument unless the generalized Riemann hypothesis is disproved.)

Even using the trivial lower bound $1$ for the relative class number, one show that \eqref{bound-with-class-number} is at most $1+ \frac{1}{4.116}<\frac{4}{3}$ as in Proposition \ref{upper-bound} below. The goal would be to improve this bound. Since \eqref{bound-with-class-number} is large only when $F$ has many primes of small norm that split in $K$, it suffices to give lower bounds for the relative class number in this setting, which may be easier, either using $L$-function methods or the methods of Ellenberg-Venkatesh~\cite{Ellenberg2007}. \end{remark}
 
 \begin{prop}\label{upper-bound} For any positive integer $d$, real $R>1$, totally real field $F$ of degree $d$, CM field $K/F$, set of primes ideals $S_F$ of $\mathcal O_F$, and function $k$ from $S_F$ to positive integers, the quantity \eqref{bound-with-class-number} is at most $1+\frac{1}{4.116} = 1.24295 \dots \dots$.\end{prop}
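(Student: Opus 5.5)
We first discard the class number and reduce to a per-prime optimization, using only that $h^-(K)\ge 1$. Write
\[ A=\frac{1}{2d}\sum_{\mathfrak p\in S_F}\log(k(\mathfrak p)+1),\qquad B=\frac{1}{2d}\sum_{\mathfrak p\in S_F}k(\mathfrak p)\log\#(\mathcal O_F/\mathfrak p). \]
Since $h^-(K)\geq 1$, the numerator of \eqref{bound-with-class-number} is at most $\log(1-\frac1R)-\frac{\log 2}{2}+A$. The denominator is at least $\log(2R)+B$, because the logarithm of $2R e^{B}+1$ is at least $\log(2Re^B)$.

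If the numerator is nonpositive there is nothing to prove. Otherwise, replacing the denominator by the smaller quantity $\log(2R)+B$ only increases the fraction. So it suffices to show, with $t=1/4.116$,
\begin{equation*}
\log\Bigl(1-\frac1R\Bigr)-\frac{\log 2}{2}+A \le t\bigl(\log(2R)+B\bigr).
\end{equation*}

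Next we linearize prime by prime. For a prime power norm $N\ge 2$, set
\[ g_t(N)=\max_{k\ge 0}\bigl(\log(k+1)-t\,k\log N\bigr)\ \ge 0 . \]
The function $g_t$ is nonincreasing in $N$, and it vanishes once $t\log N\ge \log 2$, since then each step in $k$ costs at least as much as it gains. Term by term this gives $A-tB\le \frac{1}{2d}\sum_{\mathfrak p}g_t(\#\mathcal O_F/\mathfrak p)$.

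We then group the primes $\mathfrak p$ by the rational prime $p$ below them. Since $\sum_{\mathfrak p\mid p}e_{\mathfrak p}f_{\mathfrak p}=d$, there are at most $d$ primes above $p$, and each has norm at least $p$. Hence
\[ A-tB\le C(t):=\frac12\sum_{p\le 2^{1/t}} g_t(p), \]
a finite sum over the primes $p\le 2^{4.116}\approx 17.3$, that is $p\in\{2,3,5,7,11,13,17\}$. This bound is uniform in $d$, $F$, $K$, $S_F$ and $k$.

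It remains to handle $R$. We maximize $\log(1-\frac1R)-t\log R$ over $R>1$; the maximum is attained at $R=1+\frac1t$. The required inequality then becomes the purely numerical statement
\[ C(t)-\frac{\log 2}{2}-t\log 2+\log\Bigl(1-\frac{1}{1+1/t}\Bigr)-t\log\Bigl(1+\frac1t\Bigr)\le 0 \qquad\text{at } t=\frac{1}{4.116}. \]

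The main obstacle is this final numerical check. We compute each $g_t(p)$ by locating the optimal integer $k$ near $\frac{1}{p^{t}-1}-1$; this is small for small $p$, for example $k\approx 2$ or $3$ for $p=2$. We then sum and verify the sign. The constant $4.116$ is presumably the threshold where this expression crosses zero, so the margin is thin and the evaluation must be done carefully, with rigorous rounding, rather than estimated.
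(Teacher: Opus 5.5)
Your argument is the paper's proof up to normalization: the paper shows that $c$ times the numerator minus the denominator is negative at $c=4.116$. It uses exactly your steps ($h^-(K)\ge 1$, dropping the $+1$, taking $R=c+1$, and at most $d$ primes of norm at least $p$ above each rational prime $p$), and your final inequality is that one divided by $c=1/t$.

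The one correction concerns the arithmetic you deferred. The step from $k$ to $k+1$ changes $\log(k+1)-tk\log p$ by $\log\frac{k+2}{k+1}-t\log p$, so the maximizer is $k=\lfloor 1/(p^t-1)\rfloor$. That gives $k=5,3,2,1,1,1,1$ for $p=2,3,5,7,11,13,17$, not $2$ or $3$ for $p=2$. Using $2$ or $3$ would understate $C(t)$ by more than $0.03$, over a hundred times the true slack, so it would not be a valid check. With the correct values your left-hand side is about $-2.5\times10^{-4}$, the paper's $-0.00105$ divided by $4.116$, so the check does succeed.
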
 
 
\begin{proof}  For any $c>0$ we have 
\[c \left(\log \left(1-\frac{1}{R}\right) +  \frac{1}{2d} \log \left( \frac{ \prod_{\mathfrak p \in S_F} (k(\mathfrak p)+1  )}{ 2^d h^{-}(K)} \right) \right) -   \log \left( 2R\prod_{\mathfrak p\in S_F} \# (\mathcal O_F/\mathfrak p)^{\frac{k(\mathfrak p)}{2d}} +1 \right) \]
\[ \leq c \left(\log \left(1-\frac{1}{R}\right) +  \frac{1}{2d} \log \left( \frac{ \prod_{\mathfrak p \in S_F} (k(\mathfrak p)+1  )}{ 2^d h^{-}(K)} \right) \right) - \log \left( 2R\prod_{\mathfrak p\in S_F} \# (\mathcal O_F/\mathfrak p)^{\frac{k(\mathfrak p)}{2d}} \right) \]
\[ =c \frac{\log(2)}{2} - \log(2) + c\log \left(1-\frac{1}{R}\right) - \log(R)   - c \frac{\log h^{-}(K)}{2d} +\frac{1}{2d}  \sum_{\mathfrak p\in S_F} (c \log  (k(\mathfrak p)+1  )- k(\mathfrak p) \log  \# (\mathcal O_F/\mathfrak p)).\]

We bound the terms individually. We have \[ c\log \left(1-\frac{1}{R}\right) - \log(R) \leq c \log \left( 1- \frac{1}{c+1}\right)- \log(c+1)\] since the second derivative of the left hand side is maximized so the unique maximizer is obtained at point $R=c+1$ where the derivative vanishes.  We use the trivial bound $c \frac{\log h^{-}(K)}{2d}\geq 0$. Finally we have
\[ \sum_{\mathfrak p\in S_F} (c \log  (k(\mathfrak p)+1  )- k(\mathfrak p) \log  \# (\mathcal O_F/\mathfrak p))\leq  \sum_{\mathfrak p \textrm{ prime of }\mathcal O_F } \max_{k \geq 0}  (c \log  (k+1  )- k  \log  \# (\mathcal O_F/\mathfrak p)) \]
\[ = \sum_{p \textrm{ prime}} \sum_{\substack{\mathfrak p \textrm{ prime of }\mathcal O_F \\ \mathfrak p\mid p}}  \max_{k \geq 0}  (c \log  (k+1  )- k  \log  \# (\mathcal O_F/\mathfrak p)) \leq  \sum_{p \textrm{ prime}} \sum_{\substack{\mathfrak p \textrm{ prime of }\mathcal O_F \\ \mathfrak p\mid p}}    \max_{k \geq 0}  (c \log  (k+1  )- k  \log   p)  \]\[ \leq \sum_{p \textrm{ prime}} d  \max_{k \geq 0}  (c \log  (k+1  )- k  \log   p)  \]
since there are as most $d$ primes of $F$ lying over each prime $p$ and each has a residue field of size at least $p$. Combining these observations, we obtain
\[c \left(\log \left(1-\frac{1}{R}\right) +  \frac{1}{2d} \log \left( \frac{ \prod_{\mathfrak p \in S_F} (k(\mathfrak p)+1  )}{ 2^d h^{-}(K)} \right) \right) -   \log \left( 2R\prod_{\mathfrak p\in S_F} \# (\mathcal O_F/\mathfrak p)^{\frac{k(\mathfrak p)}{2d}} +1 \right) \]
\[\leq  c \frac{\log(2)}{2} - \log(2) + c \log \left( 1- \frac{1}{c+1}\right)- \log(c+1) + \frac{1}{2} \sum_{p \textrm{ prime} } \max_{k \geq 0}  (c \log  (k+1  )- k  \log   p) .\]
Setting $c =4.116$, this bound evaluates to $-.00105 \dots <0$, with the maximum value of $k$ equal to $5$ for $p=2$, $3$ for $p=3$, $2$ for $p=5$, $1$ for $p=7,11, 13,17$, and $0$ for all other values of $p$. Hence the numerator in \eqref{bound-with-class-number} is at most $4.1$ times the denominator and thus the fraction is at most $\frac{1}{4.1}$, as desired.\end{proof}

\bibliographystyle{plain}

\bibliography{references}

\end{document}